\newcommand{\R}{\mathbb{R}}
\newcommand{\Z}{\mathbb{Z}}
\newcommand{\C}{\mathbb{C}}
\newcommand{\Ccal}{\mathcal{C}}
\newcommand{\Fcal}{\mathcal{F}}
\newcommand{\Kcal}{\mathcal{K}}
\newcommand{\Pcal}{\mathcal{P}}
\newcommand{\tsp}{{\sf T}}
\newcommand{\finv}{\Fcal^{-1}}
\newcommand{\floor}[1]{\lfloor #1 \rfloor}
\DeclareMathOperator{\vol}{vol}
\DeclareMathOperator{\ort}{O}
\def\spheresdp/{A}
\def\spacesdp/{B}
\newtheorem{defin}{Definition}[section]
\newtheorem{theorem}[defin]{Theorem}
\newtheorem{lemma}[defin]{Lemma}
\newcommand{\defi}[1]{\textit{#1}}
\begin{document}

\title{Upper bounds for packings of spheres of~several~radii}

\author{David de Laat}

\address{D.~de Laat, Delft Institute of Applied Mathematics, Delft
  University of Technology, P.O. Box 5031, 2600 GA Delft, The
  Netherlands} 
\email{mail@daviddelaat.nl}

\author{Fernando M\'ario de Oliveira Filho}

\address{F.M.~de Oliveira Filho, Institut f\"ur Mathematik,
Freie Universit\"at Berlin, Arnim\-allee 2, 14195 Berlin, Germany}

\email{fmario@mi.fu-berlin.de}

\author{Frank Vallentin} 

\address{F.~Vallentin, Delft Institute of Applied Mathematics, Delft
  University of Technology, P.O. Box 5031, 2600 GA Delft, The
  Netherlands}

\email{f.vallentin@tudelft.nl}

\thanks{The first and third author were supported by Vidi grant
  639.032.917 from the Netherlands Organization for Scientific
  Research (NWO). The second author was supported by Rubicon grant
  680-50-1014 from the Netherlands Organization for Scientific
  Research (NWO)}

\subjclass{52C17, 90C22}

\keywords{sphere packing, spherical codes, polydisperse spheres, unequal error-protection,
  theta number, polynomial optimization, semidefinite programming}

\date{June 12, 2012}

\begin{abstract}
  We give theorems that can be used to upper bound the densities of
  packings of different spherical caps in the unit sphere and of
  translates of different convex bodies in Euclidean space. These
  theorems extend the linear programming bounds for packings of
  spherical caps and of convex bodies through the use of semidefinite
  programming. We perform explicit computations, obtaining new
  bounds for packings of spherical caps of two different sizes and
  for binary sphere packings. We also slightly improve bounds for
  the classical problem of packing identical spheres.
\end{abstract}

\maketitle

\markboth{D. de Laat, F.M.~de Oliveira Filho, and
F.~Vallentin}{Upper bounds for packings of spheres of several radii}

\section{Introduction}

How densely can one pack given objects into a given container?
Problems of this sort, generally called \textit{packing problems}, are
fundamental problems in geometric optimization.

An important example having a rich history is the sphere packing
problem. Here one tries to place equal-sized spheres with pairwise
disjoint interiors into\linebreak $n$-dimensional Euclidean space while
maximizing the fraction of covered space. In two dimensions the best
packing is given by placing open disks centered at the points of the
hexagonal lattice. In three dimensions, the statement that the best
sphere packing has density~$\pi/\sqrt{18} = 0.7404\ldots$ was known as
Kepler's conjecture; it was proved by Hales~\cite{Hales2005} in 1998
by means of a computer-assisted proof. 

Currently, one of the best methods for obtaining upper bounds for the
density of sphere packings is due to Cohn and
Elkies~\cite{CohnElkies2003}. In 2003 they used linear programming to
obtain the best known upper bounds for the densities of sphere
packings in dimensions~$4, \ldots, 36$. They almost closed the gap
between lower and upper bounds in dimensions~$8$ and~$24$. Their
method is the noncompact version of the linear programming method of
Delsarte, Goethals, and Seidel~\cite{DelsarteGoethalsSeidel1977} for
upper-bounding the densities of packings of spherical caps on the unit
sphere.

From a physical point of view, packings of spheres of different sizes
are relevant as they can be used to model chemical mixtures which
consist of multiple atoms or, more generally, to model the structure
of composite material. For more about technological applications of
these kind of systems of polydisperse, totally impenetrable spheres we
refer to Torquato \cite[Chapter 6]{Torquato2001}. In recent work, Hopkins,
Jiao, Stillinger, and
Torquato~\cite{HopkinsJiaoStillingerTorquato2011,
  HopkinsStillingerTorquato2012} presented lower bounds for the
densities of packings of spheres of two different sizes, also called
\defi{binary sphere packings}.

In coding theory, packings of spheres of different sizes are important
in the design of error-correcting codes which can be used for
unequal error protection. Masnick and
Wolf~\cite{MasnickWolf1967} were the first who considered codes with
this property.

In this paper we extend the linear programming method of Cohn and
Elkies to obtain new upper bounds for the densities of multiple-size
sphere packings. We also extend the linear programming method of
Delsarte, Goethals, and Seidel to obtain new upper bounds for the
densities of multiple-size spherical cap packings.

We perform explicit calculations for binary packings in both cases
using semidefinite, instead of linear, programming. In particular we
complement the constructive lower bounds of Hopkins, Jiao, Stillinger,
and Torquato by non-constructive upper bounds. Insights gained from
our computational approach are then used to improve known upper bounds
for the densities of monodisperse sphere packings in dimensions~$4$,
\dots~$9$, except~$8$. The bounds we present improve on the best-known
bounds due to Cohn and Elkies~\cite{CohnElkies2003}.

\subsection{Methods and theorems}
\label{sec:methods and theorems}

We model the packing problems using tools from combinatorial
optimization. All possible positions of the objects which we can use
for the packing are vertices of a graph and we draw edges between two
vertices whenever the two corresponding objects cannot be
simultaneously present in the packing because they overlap in their
interiors. Now every independent set in this conflict graph gives a
valid packing and vice versa. To determine the density of the packing
we use vertex weights since we want to distinguish between ``small''
and ``big'' objects. For finite graphs it is known that the weighted
independence number can be upper bounded by the weighted theta
number. Our theorems for packings of spherical caps and spheres are
infinite-dimensional analogues of this result.

Let~$G = (V, E)$ be a finite graph. A set~$I\subseteq V$ is 
\emph{independent} if no two vertices in~$I$ are adjacent. Given a
weight function~$w \colon V \to \R_{\geq 0}$, the \emph{weighted
independence number} of~$G$ is the maximum weight of an independent
set, i.e.,
\[
\alpha_w(G) = \max\biggl\{\sum_{x \in I} w(x) : \text{$I \subseteq V$
  is independent}\biggr\}.
\]
Finding $\alpha_w(G)$ is an NP-hard problem.

Gr\"otschel, Lov\'asz, and
Schrijver~\cite{GroetschelLovaszSchrijver1981} defined a graph
parameter that gives an upper bound for~$\alpha_w$ and which can be
computed efficiently by semidefinite optimization. It can be presented
in many different, yet equivalent ways, but the one convenient for us is
\[
\begin{array}{rll}
\vartheta'_w(G) = \min&M\\
&K - (w^{1/2}) (w^{1/2})^{\sf T}&\text{is positive semidefinite},\\
&K(x,x) \leq M&\text{for all $x \in V$},\\
&K(x,y) \leq 0&\text{for all $\{x,y\} \not\in E$ where $x \neq y$},\\
&\text{$M \in \R$, $K \in \R^{V \times V}$ is symmetric.}\span
\end{array}
\]

Here we give a proof of the fact that $\vartheta'_w(G)$ upper
bounds $\alpha_w(G)$. In a sense, after discarding the analytical arguments
in the proofs of Theorems~\ref{thm:sphere} and~\ref{thm:space}, we are
left with this simple proof.

\begin{theorem}
\label{thm:theta}
  For any finite graph $G = (V, E)$ with weight function $w\colon V \to
  \R_{\geq 0}$ we have $\alpha_w(G) \leq \vartheta'_w(G)$.
\end{theorem}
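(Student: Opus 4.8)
The plan is to fix a maximum-weight independent set $I \subseteq V$, so that $\sum_{x \in I} w(x) = \alpha_w(G)$, and to show that $\sum_{x \in I} w(x) \leq M$ for every feasible solution $(M, K)$ of the semidefinite program defining $\vartheta'_w(G)$; taking the minimum over all such $(M,K)$ then gives $\alpha_w(G) \leq \vartheta'_w(G)$. The key device is the test vector $z \in \R^V$ defined by $z(x) = w(x)^{1/2}$ for $x \in I$ and $z(x) = 0$ otherwise — the point being that this is a \emph{weighted} characteristic vector of $I$, not the plain one.

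First I would plug $z$ into the positive semidefiniteness constraint. Since $K - (w^{1/2})(w^{1/2})^{\sf T}$ is positive semidefinite,
\[
z^{\sf T} K z \;\geq\; z^{\sf T} (w^{1/2})(w^{1/2})^{\sf T} z \;=\; \bigl((w^{1/2})^{\sf T} z\bigr)^2 \;=\; \Bigl(\sum_{x \in I} w(x)\Bigr)^2,
\]
where the last equality uses $(w^{1/2})^{\sf T} z = \sum_{x\in I} w(x)^{1/2} w(x)^{1/2}$. Next I would bound $z^{\sf T} K z = \sum_{x,y \in I} w(x)^{1/2} w(y)^{1/2} K(x,y)$ from above by separating diagonal and off-diagonal contributions. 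The diagonal part is $\sum_{x \in I} w(x) K(x,x) \leq M \sum_{x \in I} w(x)$ by the constraint $K(x,x) \leq M$. For the off-diagonal part, any two distinct $x,y \in I$ satisfy $\{x,y\} \notin E$ because $I$ is independent, hence $K(x,y) \leq 0$, and since $w(x)^{1/2} w(y)^{1/2} \geq 0$ these terms only decrease the sum. Combining,
\[
\Bigl(\sum_{x \in I} w(x)\Bigr)^2 \;\leq\; z^{\sf T} K z \;\leq\; M \sum_{x \in I} w(x).
\]

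To finish I would divide by $\sum_{x \in I} w(x)$ when it is positive, obtaining $\sum_{x\in I} w(x) \leq M$; if $\sum_{x\in I} w(x) = 0$ the claim reads $0 \leq M$, which follows (for nonempty $V$) from $M \geq K(x,x) \geq w(x) \geq 0$, the middle inequality again coming from the positive semidefiniteness of $K - (w^{1/2})(w^{1/2})^{\sf T}$. I do not expect a genuine obstacle here; the only step that requires a moment's thought is the choice of $z$ with entries $w(x)^{1/2}$ instead of the ordinary indicator of $I$, since that is precisely what makes the quadratic form produce $\bigl(\sum_{x\in I} w(x)\bigr)^2$ and lets the weights cancel cleanly. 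This short argument is the combinatorial core that, as the authors remark, will survive once the analytic machinery is removed from the proofs of Theorems~\ref{thm:sphere} and~\ref{thm:space}.
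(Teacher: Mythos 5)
Your proof is correct and is essentially the paper's own argument: both consider the quantity $\sum_{x,y\in I} w(x)^{1/2}w(y)^{1/2}K(x,y)$ (your $z^{\sf T}Kz$) and squeeze it between $\bigl(\sum_{x\in I}w(x)\bigr)^2$ and $M\sum_{x\in I}w(x)$. The only cosmetic difference is that you phrase the key sum as a quadratic form in the weighted indicator vector $z$ and spell out the trivial case $\sum_{x\in I}w(x)=0$, which the paper avoids by restricting to independent sets of nonzero weight.
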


\begin{proof}
  Let $I \subseteq V$ be an independent set of nonzero weight and let
  $K \in \R^{V \times V}$, $M \in \R$ be a feasible solution of
  $\vartheta'_w(G)$. Consider the sum
\begin{equation*}
\label{eq:thetasum}
\sum_{x,y \in I} w(x)^{1/2} w(y)^{1/2} K(x,y).
\end{equation*}

This sum is at least 
\[
\sum_{x,y \in I} w(x)^{1/2} w(y)^{1/2}
w(x)^{1/2} w(y)^{1/2} = \biggl(\sum_{x \in I} w(x)\biggr)^2
\]
because $K - (w^{1/2}) (w^{1/2})^{\sf T}$ is positive semidefinite. 

The sum is also at most
\[
\sum_{x \in I} w(x) K(x,x) \leq M \sum_{x \in I} w(x)
\]
because $K(x,x) \leq M$ and because $K(x,y) \leq 0$ whenever $x \neq
y$ as $I$ forms an independent set. Now combining both inequalities
proves the theorem.
\end{proof}

\subsection*{Multiple-size spherical cap packings}

We first consider packings of spherical caps of several radii on the
unit sphere~$S^{n-1} = \{\, x \in \R^n : x \cdot x = 1\, \}$. The
spherical cap with angle~$\alpha \in [0,\pi]$ and center $x \in
S^{n-1}$ is given by
\[
C(x,\alpha) = \{\,y \in S^{n-1} : x \cdot y \geq \cos \alpha\,\}.
\]
Its normalized volume equals
\[
w(\alpha) = \frac{\omega_{n-1}(S^{n-2})}{\omega_n(S^{n-1})} \int_{\cos \alpha}^1 (1-u^2)^{(n-3)/2}\, du,
\]
where~$\omega_n(S^{n-1}) = (2\pi^{n/2})/\Gamma(n/2)$ is the surface
area of the unit sphere. Two spherical caps $C(x_1,\alpha_1)$ and
$C(x_2,\alpha_2)$ intersect in their topological interiors if and only
if the inner product of $x_1$ and $x_2$ lies in the
interval~$(\cos(\alpha_1 + \alpha_2), 1]$. Conversely we have
\[
C(x_1,\alpha_1)^\circ \cap C(x_2,\alpha_2)^\circ = \emptyset\quad
\iff\quad
x_1 \cdot x_2 \leq \cos(\alpha_1 + \alpha_2).
\]
A \emph{packing} of spherical caps with angles~$\alpha_1$,
\dots,~$\alpha_N$ is a union of any number of spherical caps with
these angles and pairwise-disjoint interiors. The
density of the packing is the sum of the normalized volumes of the
constituting spherical caps. 

The optimal packing density is given by the weighted independence number
of the \emph{spherical cap packing graph}. This is the graph with
vertex set $S^{n-1} \times \{1, \ldots, N\}$, where a vertex $(x, i)$
has weight $w(\alpha_i)$, and where two distinct vertices~$(x, i)$
and~$(y, j)$ are adjacent if~$\cos(\alpha_i + \alpha_j) <
x \cdot y$.

In Section~\ref{sec:spherical cap packings} we will extend the
weighted theta prime number to the spherical cap packing graph. There
we will also derive Theorem~\ref{thm:sphere} below, which gives upper
bounds for the densities of packings of spherical caps. We will show
that the sharpest bound given by this theorem is in fact equal to the
theta prime number.

In what follows we denote by~$P_k^n$ the Jacobi
polynomial~$P^{((n-3)/2, (n-3)/2)}_k$ of degree~$k$, normalized so
that~$P_k^n(1) = 1$.

\begin{theorem}
\label{thm:sphere}
Let~$\alpha_1$, \dots,~$\alpha_N \in (0, \pi]$ be angles and for~$i$,
$j = 1$, \dots,~$N$ and~$k \geq 0$ let~$f_{ij,k}$ be real numbers such
that~$f_{ij,k} = f_{ji,k}$ and~$\sum_{k=0}^\infty |f_{ij,k}| < \infty$
for all~$i$, $j$. Write
\begin{equation}
\label{eq:spheref}
f_{ij}(u) = \sum_{k=0}^\infty f_{ij, k} P^{n}_k(u).
\end{equation}

Suppose the functions $f_{ij}$ satisfy the following conditions:
\begin{enumerate}
\item[(i)] \label{cond:sphere1} $\bigl(f_{ij, 0} - w(\alpha_i)^{1/2}
  w(\alpha_j)^{1/2}\bigr)_{i, j=1}^N$ is positive semidefinite;
\item[(ii)] \label{cond:sphere2} $\bigl(f_{ij, k}\bigr)_{i,j=1}^N$ is
  positive semidefinite for $k \geq 1$;
\item[(iii)] \label{cond:sphere3} $f_{ij}(u) \leq 0$ whenever~$-1 \leq u \leq \cos(\alpha_i
  + \alpha_j)$.
\end{enumerate}
Then the density of every packing of spherical caps with
angles~$\alpha_1$, \dots,~$\alpha_N$ on the unit sphere~$S^{n-1}$ is
at most~$\max\{\,f_{ii}(1) : \text{$i = 1$, \dots,~$N$}\,\}$.
\end{theorem}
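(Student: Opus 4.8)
The plan is to mimic the proof of Theorem~\ref{thm:theta}, using the
positive-semidefiniteness conditions (i) and (ii) to play the role of the
matrix $K - (w^{1/2})(w^{1/2})^{\sf T}$ being positive semidefinite, and
condition (iii) to play the role of $K(x,y) \le 0$ on non-edges. Concretely,
let a packing be given, say consisting of caps $C(x_1, \alpha_{i_1}), \dots,
C(x_m, \alpha_{i_m})$ with pairwise-disjoint interiors; so $x_k \cdot x_\ell
\le \cos(\alpha_{i_k} + \alpha_{i_\ell})$ for $k \ne \ell$. The analogue of
the double sum $\sum_{x,y\in I} w(x)^{1/2} w(y)^{1/2} K(x,y)$ will be
\[
S = \sum_{k,\ell=1}^m f_{i_k i_\ell}(x_k \cdot x_\ell).
\]
First I would bound $S$ from above: for $k = \ell$ the term is
$f_{i_k i_k}(1) \le \max_i f_{ii}(1)$, and for $k \ne \ell$ condition (iii)
gives $f_{i_k i_\ell}(x_k\cdot x_\ell) \le 0$ since $x_k \cdot x_\ell \le
\cos(\alpha_{i_k}+\alpha_{i_\ell})$. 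Hence $S \le m\max_i f_{ii}(1)$.

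The lower bound on $S$ is where the analytic input enters, and I expect this
to be the main obstacle. The idea is to integrate rather than merely sum:
replace each point $x_k$ by the cap it carries and exploit the positive
definiteness of the Jacobi polynomials $P^n_k$ as zonal functions on
$S^{n-1}$. Expanding $f_{i_k i_\ell}$ via \eqref{eq:spheref} and using the
classical positive-definiteness (Schoenberg-type) property — that for any
finite set of points $x_1,\dots,x_m$ on $S^{n-1}$ the matrix
$\bigl(P^n_k(x_k\cdot x_\ell)\bigr)_{k,\ell}$ is positive semidefinite — one
gets, for each fixed $k\ge 1$,
\[
\sum_{k',\ell'=1}^m \big(\text{coefficient matrix in }i\big)_{i_{k'} i_{\ell'}}
\, P^n_k(x_{k'}\cdot x_{\ell'}) \ge 0,
\]
because condition (ii) says the coefficient matrix $(f_{ij,k})_{i,j}$ is
positive semidefinite and a product of a PSD "label" matrix with a PSD "Gram"
matrix, summed appropriately (a Schur/Hadamard-type product argument), is
nonnegative. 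The $k=0$ term is handled by condition (i): writing
$f_{ij,0} = w(\alpha_i)^{1/2}w(\alpha_j)^{1/2} + (\text{PSD remainder})$, the
PSD remainder contributes something nonnegative, and the leading part
contributes $\bigl(\sum_{k=1}^m w(\alpha_{i_k})^{1/2}\bigr)^2 \ge \sum_k
w(\alpha_{i_k})$ is \emph{not} quite what we want — rather, since
$P^n_0 \equiv 1$, the leading part contributes $\sum_{k,\ell}
w(\alpha_{i_k})^{1/2}w(\alpha_{i_\ell})^{1/2} = \bigl(\sum_k
w(\alpha_{i_k})^{1/2}\bigr)^2$, and I will need to compare this with the
packing density $\sum_k w(\alpha_{i_k})$. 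Summing the $k\ge 0$ bounds and the
$k=0$ identity yields $S \ge \bigl(\sum_{k=1}^m w(\alpha_{i_k})^{1/2}\bigr)^2$.

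To close the argument I would combine the two bounds:
$\bigl(\sum_{k=1}^m w(\alpha_{i_k})^{1/2}\bigr)^2 \le m \max_i f_{ii}(1)$,
and then observe by Cauchy--Schwarz that
$\bigl(\sum_{k=1}^m w(\alpha_{i_k})^{1/2} \cdot 1\bigr)^2 \le m \sum_{k=1}^m
w(\alpha_{i_k})$ goes the wrong way; instead the correct route is to work not
with a finite packing of $m$ caps but with the full (possibly infinite)
packing and a limiting/averaging argument over $S^{n-1}$, so that the count
$m$ is replaced by the total measure $\sum_k w(\alpha_{i_k})$ of the caps.
That is, one sets up the double sum as a measure-theoretic pairing against the
packing configuration, divides by $\sum_k w(\alpha_{i_k})$, and the
$\bigl(\sum_k w(\alpha_{i_k})^{1/2}\bigr)^2$ on the left becomes, after
normalizing by the cap volumes properly, exactly the density. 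The genuinely
delicate points — and the part I expect to require the most care — are (a)
justifying the interchange of the infinite sum in \eqref{eq:spheref} with the
integrations/summations, which is exactly why the hypothesis $\sum_k
|f_{ij,k}| < \infty$ is imposed, and (b) correctly normalizing so that the
quadratic form in the cap weights collapses to the linear density; the
passage from a finite $m$-point packing to the density should be done via the
same device already used implicitly in Theorem~\ref{thm:theta}, namely that
the independent set $I$ there is weighted, so here each cap is ``weighted'' by
its volume and the double sum is taken with those weights, giving
$\bigl(\sum_{\text{caps}} w(\alpha)\bigr)^2$ on the lower side and
$\max_i f_{ii}(1) \cdot \sum_{\text{caps}} w(\alpha)$ on the upper side,
whence the density $\sum_{\text{caps}} w(\alpha) \le \max_i f_{ii}(1)$.
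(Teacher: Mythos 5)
Your proposal eventually converges on the paper's argument, but the central part as written contains a genuine misstep that you then paper over with unnecessary and misleading machinery. The error is in your definition of the double sum: you set
\[
S = \sum_{k,\ell=1}^m f_{i_k i_\ell}(x_k \cdot x_\ell)
\]
\emph{without} the weight factors $w(\alpha_{i_k})^{1/2} w(\alpha_{i_\ell})^{1/2}$. You correctly note that this leads to the pair of bounds $\bigl(\sum_k w(\alpha_{i_k})^{1/2}\bigr)^2 \le S \le m \max_i f_{ii}(1)$, and that Cauchy--Schwarz ``goes the wrong way,'' so these cannot be combined to get the density bound. But the remedy you then sketch --- a ``limiting/averaging argument over $S^{n-1}$,'' a ``measure-theoretic pairing against the packing configuration'' --- is not what is needed and does not lead anywhere; no such analytic device appears or is required.

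The actual fix, which is exactly what the paper does and which you describe correctly only in the final sentence, is simply to build the weights into the quadratic form from the start, precisely as in the proof of Theorem~\ref{thm:theta} that you claim to be mimicking. That is, one considers
\[
S' = \sum_{k,\ell=1}^m w(\alpha_{i_k})^{1/2} w(\alpha_{i_\ell})^{1/2}\, f_{i_k i_\ell}(x_k \cdot x_\ell).
\]
Then the $k\ge 1$ terms of the Jacobi expansion are nonnegative as you argue (Schur product of the PSD label matrix $(f_{ij,k})$, pulled back along $r$, with the PSD Gram matrix $(P^n_k(x_{k'}\cdot x_{\ell'}))$, paired with the nonnegative weight vector); the $k=0$ term, using $P^n_0\equiv 1$ and condition~(i), gives the lower bound $\bigl(\sum_k w(\alpha_{i_k})\bigr)^2$ --- the full $w$'s, not $w^{1/2}$'s; and condition~(iii) gives the upper bound $\sum_k w(\alpha_{i_k}) f_{i_k i_k}(1) \le \max_i f_{ii}(1) \sum_k w(\alpha_{i_k})$. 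Dividing by $\sum_k w(\alpha_{i_k})$ finishes the proof. The cardinality $m$ never enters, and no averaging or measure-theoretic reformulation is involved. You should delete the unweighted $S$, the Cauchy--Schwarz digression, and the speculative measure-theoretic remarks, and carry out the weighted calculation cleanly; once that is done your proof and the paper's coincide.
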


When~$N = 1$, Theorem~\ref{thm:sphere} reduces to the linear
programming bound for spherical cap packings of Delsarte, Goethals,
and Seidel \cite{DelsarteGoethalsSeidel1977}. In Section~\ref{sec:unit
  sphere computations} we use semidefinite programming instead of
linear programming to perform explicit computations for $N = 2$.

\subsection*{Translational packings of bodies and multiple-size sphere
  packings}

We now deal with packings of spheres with several radii in
$\R^n$. Theorem~\ref{thm:space} presented below can be used to find
upper bounds for the densities of such packings. In fact, it is more
general and can be applied to packings of translates of different
convex bodies.

Let~$\Kcal_1$, \dots,~$\Kcal_N$ be convex bodies in~$\R^n$. A translational
\emph{packing} of $\Kcal_1$,~\dots,~$\Kcal_N$ is
a union of translations of these bodies in which any two copies have
disjoint interiors. The \defi{density} of a packing is the fraction of
space covered by it. There are different ways to formalize this
definition, and questions appear as to whether every packing has a
density and so on. We postpone further discussion on this matter until
Section~\ref{sec:sphere packings} where we give a proof of
Theorem~\ref{thm:space}.

Our theorem can be seen as an analogue of the weighted theta prime
number~$\vartheta_w'$ for the infinite graph~$G$ whose vertex set
is~$\R^n \times \{1, \ldots, N\}$ and in which vertices~$(x, i)$
and~$(y, j)$ are adjacent if~$x + \Kcal_i$ and~$y + \Kcal_j$ have
disjoint interiors. The weight function we consider assigns
weight~$\vol \Kcal_i$ to vertex~$(x, i) \in \R^n \times \{ 1, \ldots,
N \}$. We will say more about this interpretation in
Section~\ref{sec:sphere packings}.

For the statement of the theorem we need some basic facts from
harmonic analysis. Let~$f\colon \R^n \to \C$ be an~$L^1$
function. For~$u \in \R^n$, the Fourier transform of~$f$ at~$u$ is
\[
\hat{f}(u) = \int_{\R^n} f(x) e^{-2\pi i u \cdot x}\, dx.
\]
We say that function~$f$ is a \defi{Schwartz function} (also called a
\defi{rapidly-decreasing function}) if it is infinitely
differentiable, and if any derivative of~$f$, multiplied by any power
of the variables~$x_1$, \dots,~$x_n$, is a bounded function. The
Fourier transform of a Schwartz function is a
Schwartz function, too. A Schwartz function can be recovered from its
Fourier transform by means of the \defi{inversion formula}:
\[
f(x) = \int_{\R^n} \hat{f}(u) e^{2\pi i u \cdot x}\, du
\]
for all~$x \in \R^n$.

\begin{theorem}
\label{thm:space}
Let~$\Kcal_1$, \dots,~$\Kcal_N$ be convex bodies in~$\R^n$ and
let~$f\colon \R^n \to \R^{N \times N}$ be a matrix-valued function
whose every component~$f_{ij}$ is a Schwartz function. Suppose~$f$
satisfies the following conditions:
\begin{enumerate}

\item[(i)] the matrix~$\bigl(\hat{f}_{ij}(0) - (\vol \Kcal_i)^{1/2}
  (\vol \Kcal_j)^{1/2}\bigr)_{i,j=1}^N$ is positive semidefinite;

\item[(ii)] the matrix of Fourier
  transforms~$\bigl(\hat{f}_{ij}(u)\bigr)_{i,j=1}^N$ is
  positive semidefinite for every~$u \in \R^n \setminus \{0\}$;

\item[(iii)] $f_{ij}(x) \leq 0$ whenever~$\Kcal_i^{\circ} \cap (x
  + \Kcal_j^{\circ}) = \emptyset$.
\end{enumerate}
Then the density of any packing of translates of~$\Kcal_1$,
\dots,~$\Kcal_N$ in the Euclidean space~$\R^n$ is at most~$\max\{\,
f_{ii}(0) : \text{$i = 1$, \dots, $N$}\,\}$.
\end{theorem}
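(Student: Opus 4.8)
The plan is to follow the proof of Theorem~\ref{thm:theta} as closely as possible, with a packing playing the role of the independent set, the matrix-valued function~$f$ playing the role of the feasible matrix~$K$, and Poisson summation standing in for the finite double sum. As a first step I would reduce to \emph{periodic} packings: those described by a lattice~$L\subseteq\R^n$ together with finite sets~$T_i\subseteq\R^n$ of translation vectors, all lying in a fixed fundamental domain of~$L$, such that the bodies~$t+\ell+\Kcal_i$ (with~$\ell\in L$, $t\in T_i$, $1\le i\le N$) have pairwise-disjoint interiors; such a packing has density~$\vol(\R^n/L)^{-1}\sum_{i=1}^N |T_i|\,\vol\Kcal_i$. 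It is a standard fact, to be made precise in Section~\ref{sec:sphere packings} (where the density of an arbitrary packing is defined carefully), that it is enough to prove the bound~$B:=\max_i f_{ii}(0)$ for periodic packings.

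Fix such a packing and write~$w_i=\vol\Kcal_i$. Imitating the proof of Theorem~\ref{thm:theta}, I would study the sum
\[
S=\sum_{i,j=1}^N w_i^{1/2}w_j^{1/2}\sum_{t\in T_i}\ \sum_{t'\in T_j}\ \sum_{\ell\in L} f_{ij}(t'-t+\ell),
\]
which converges absolutely because the~$f_{ij}$ are Schwartz. Applying Poisson summation to the inner sum over~$L$ rewrites this as
\[
S=\frac{1}{\vol(\R^n/L)}\sum_{u\in L^*}\ \sum_{i,j=1}^N \hat f_{ij}(u)\,v_i(u)\,\overline{v_j(u)},\qquad v_i(u)=w_i^{1/2}\sum_{t\in T_i}e^{-2\pi i\,u\cdot t},
\]
with~$L^*$ the dual lattice. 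For the lower bound on~$S$: the~$u=0$ term equals~$\sum_{i,j}\hat f_{ij}(0)\,w_i^{1/2}w_j^{1/2}|T_i|\,|T_j|$, which by condition~(i), applied to the vector with entries~$w_i^{1/2}|T_i|$, is at least~$\bigl(\sum_i w_i|T_i|\bigr)^2$, while every term with~$u\ne0$ is of the form~$z^*(\hat f_{ij}(u))_{i,j}\,z$ with~$z_i=\overline{v_i(u)}$ and hence nonnegative by condition~(ii); thus~$S\ge\vol(\R^n/L)^{-1}\bigl(\sum_i w_i|T_i|\bigr)^2$. For the upper bound: going back to the defining double sum, the only terms in which~$t+\Kcal_i$ and~$t'+\ell+\Kcal_j$ are the same body of the packing are those with~$i=j$, $t=t'$, $\ell=0$ (since~$T_i$ and~$T_j$ lie in one fundamental domain), and they contribute~$\sum_i w_i|T_i|\,f_{ii}(0)$; in every other term~$t+\Kcal_i$ and~$t'+\ell+\Kcal_j$ are distinct members of the packing, so their interiors are disjoint, i.e.~$\Kcal_i^\circ\cap\bigl((t'-t+\ell)+\Kcal_j^\circ\bigr)=\emptyset$, whence condition~(iii) gives~$f_{ij}(t'-t+\ell)\le0$, and since~$w_i^{1/2}w_j^{1/2}\ge0$ these contributions are nonpositive. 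Therefore~$S\le\sum_i w_i|T_i|\,f_{ii}(0)\le B\sum_i w_i|T_i|$. Combining the two estimates and dividing by~$\sum_i w_i|T_i|$ (which we may take to be positive, the packing being otherwise empty) shows that the density~$\vol(\R^n/L)^{-1}\sum_i w_i|T_i|$ is at most~$B$.

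Apart from a minor point — the matrices~$(\hat f_{ij}(u))_{i,j}$ are Hermitian, which is implicit in the positive-semidefiniteness hypotheses and is what makes the quadratic forms above real-valued — the computation is a direct transcription of the proof of Theorem~\ref{thm:theta}, and that is not where the difficulty lies. The main obstacle, to be handled in Section~\ref{sec:sphere packings}, is the analytic scaffolding around the notion of density: giving a robust definition of the density of a general, not necessarily periodic, packing, and showing that this density can be approximated from below by densities of periodic packings, so that the periodic case indeed suffices — here the boundedness of the~$\Kcal_i$ enters, by cutting out a large block of the given packing and tiling~$\R^n$ with translated copies of it. Verifying that Poisson summation applies to the Schwartz functions~$f_{ij}$ is, by contrast, routine.
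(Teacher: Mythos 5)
Your proof is correct and is essentially the paper's own argument: reduce to periodic packings, form the same quadratic double sum weighted by $(\vol\Kcal_i)^{1/2}$, apply Poisson summation, bound below via the $u=0$ term and condition (i), and bound above via condition (iii), with conditions (ii) handling the $u\neq 0$ terms. The only superficial difference is bookkeeping (you group centers by body type $T_i$ and contract the $N\times N$ matrix $(\hat f_{ij}(u))$ against the complex vector $v(u)$, whereas the paper indexes centers $x_1,\dots,x_m$ with a type map $r$ and pulls the PSD condition back to an $m\times m$ matrix), which does not change the substance of the proof.
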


We give a proof of this theorem in Section~\ref{sec:sphere
  packings}. When~$N = 1$ and when the convex body~$\Kcal_1$ is
centrally symmetric (an assumption which is in fact not needed) then this theorem
reduces to the linear programming method of Cohn and
Elkies~\cite{CohnElkies2003}.

We apply this theorem to obtain upper bounds for the densities of
binary sphere packings, as we discuss in
Section~\ref{ssec:computations space}.

\subsection{Computational results for binary spherical cap packings}
\label{ssec:computations sphere}

We applied Theorem~\ref{thm:sphere} to compute upper bounds for the
densities of binary spherical cap packings. The results we obtained
are summarized in the plots of Figure~\ref{fig:binary cap bounds}.

\begin{figure}[tb]
\centering
\subfloat[$n=3$]{\includegraphics{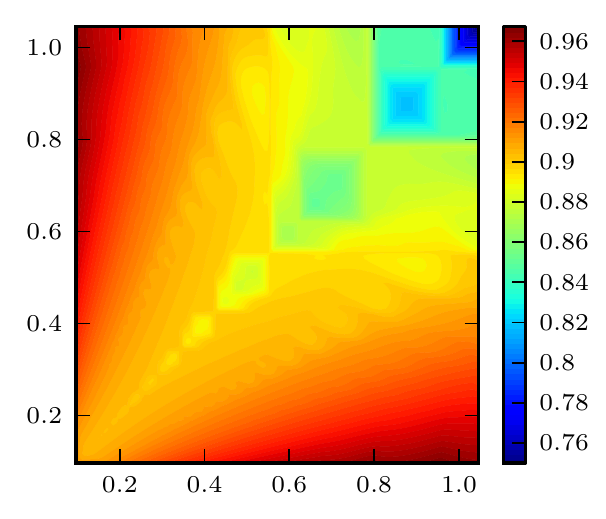}}
\subfloat[SDP bound/geometric bound for $n=3$]{\label{fig:binary cap
    bounds b}\includegraphics{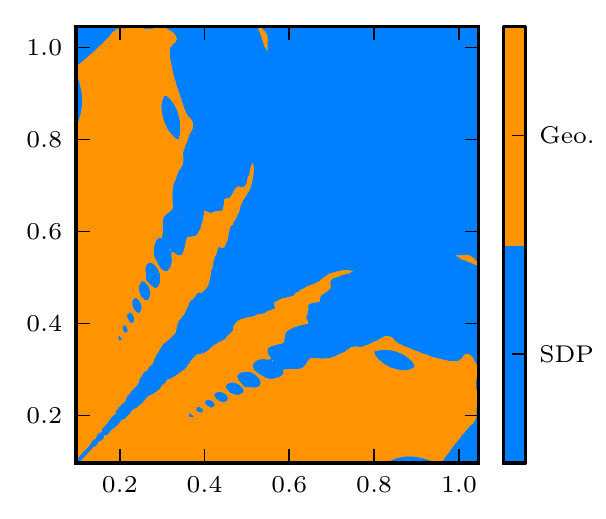}}

\subfloat[$n=4$]{\includegraphics{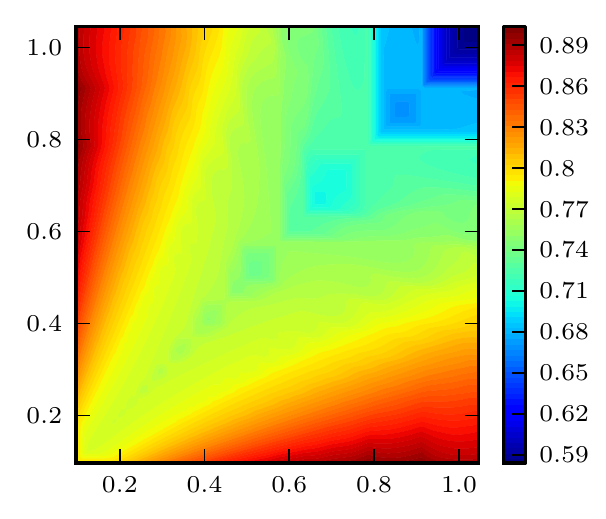}}
\subfloat[$n=5$]{\includegraphics{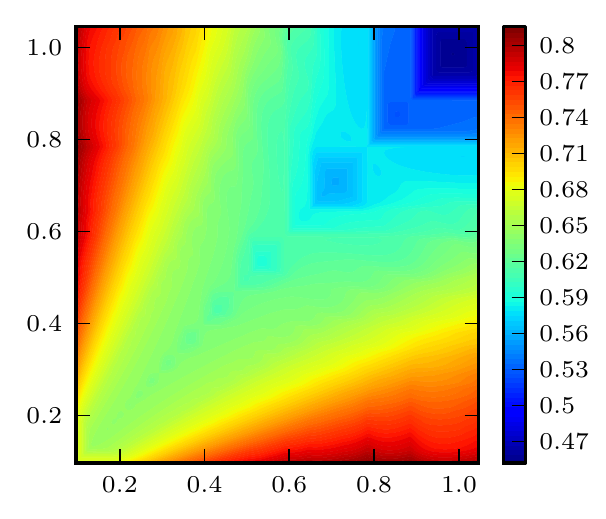}}
\caption{Upper bounds on the packing density for $N = 2$. The
  horizontal and vertical axes carry the spherical cap angle; the
  colors indicate the density, or in the case of plot (b) whether the
  SDP bound or the geometric bound is sharper.}
\label{fig:binary cap bounds}
\end{figure}

For $n=3$, Florian~\cite{Florian2001, Florian2007} provides a
geometric upper bound for the density of a spherical cap packing. He
shows that the density of a packing on $S^2$ of spherical caps with
angles $\alpha_1, \ldots, \alpha_N \in (0, \pi / 3]$ is at most
\[
\max_{1 \leq i \leq j \leq k \leq N} D(\alpha_i, \alpha_j, \alpha_k),
\]
where $D(\alpha_i, \alpha_j, \alpha_k)$ is defined as follows. Let
$\mathcal{T}$ be a spherical triangle in $S^2$ such that if we center
the spherical caps with angles $\alpha_i$, $\alpha_j$, and $\alpha_k$
at the vertices of $\mathcal{T}$, then the caps intersect pairwise at
their boundaries. The number $D(\alpha_i, \alpha_j, \alpha_k)$ is then
defined as the fraction of the area of $\mathcal{T}$ covered by the
caps.

In Figure~\ref{fig:binary cap bounds b} we see that for $N = 2$ it
depends on the angles whether the geometric or the semidefinite
programming bound is sharper. In particular we see that near the
diagonal the semidefinite programming bound is at least as good as the
geometric bound; see also Figure~\ref{fig:mono cap bounds a}.

We can construct natural multiple-size spherical cap packings by
taking the incircles of the faces of spherical Archimedean tilings. A
sequence of binary packings is for instance obtained by taking the
incircles of the prism tilings. These are the Archimedean tilings with
vertex figure $(4, 4, k)$ for $k \geq 3$ (although strictly speaking
for $k = 4$ this is a spherical Platonic tiling). The question then is
whether the packing associated with the $k$-prism has maximal density
among all packings with the same cap angles $\pi / k$ and $\pi / 2 -
\pi / k$, that is, whether the packing is \emph{maximal}. The packing
for $k = 3$ is not maximal while the one for $k = 4$ trivially is,
since here there is only one cap size, and adding a $9$th cap yields a
density greater than~$1$.

Heppes and Kert\'esz~\cite{HeppesKertesz1997} showed that the
configurations for $k \geq 6$ are maximal, and the remaining case $k =
5$ was later shown to maximal by Florian and
Heppes~\cite{FlorianHeppes1999}. Florian~\cite{Florian2001} showed
that the geometric bound given above is in fact sharp for the cases
where $k \geq 6$, and for the $k = 5$ case it is not sharp but still
good enough to prove maximality (notice that given a finite number of
cap angles, the set of obtainable densities is finite).

Now we illustrate that Theorem~\ref{thm:sphere} gives a
sharp bound for the density of the packing associated to the
$5$-prism, thus giving a simple proof of its maximality. The theorem
also provides a sharp bound for $n = 4$ but whether it can provide
sharp bounds for the cases $n \geq 6$ we do not know at the
moment. The numerical results are not decisive.

We shall exhibit functions
\[
f_{ij}(u) = \sum_{k=0}^4 f_{ij,k} P^n_k(u)
\]
which satisfy the conditions of Theorem~\ref{thm:sphere} with
$f_{11}(1) = 5 w(\alpha_1) + 2 w(\alpha_2)$ where
\[
\alpha_1 = \frac{\pi}{5},\;
\alpha_2 = \frac{3\pi}{10},\;
w(\alpha_1) = \frac{1}{2}\left(1 - \cos \frac{\pi}{5}\right),\;
w(\alpha_2) = \frac{1}{2}\left(1 - \cos \frac{3\pi}{10}\right).
\]
By complementary slackness of semidefinite optimization the
coefficients $f_{ij,k}$  have to satisfy the following linear conditions:
\[
0 = f_{11}\left(\cos\frac{2\pi}{5}\right) =
f_{11}\left(\cos\frac{4\pi}{5}\right) =
f'_{11}\left(\cos\frac{2\pi}{5}\right) = f_{12}(0) = f_{22}(-1);
\]
the product 
\[
\begin{pmatrix}
f_{11,0} & f_{12,0}\\
f_{12,0} & f_{22,0}
\end{pmatrix}
\begin{pmatrix}
25 w(\alpha_1) & 10 \sqrt{w(\alpha_1)w(\alpha_2)}\\
10 \sqrt{w(\alpha_1)w(\alpha_2)} & 4 w(\alpha_2)
\end{pmatrix}
\]
equals
\[
\begin{pmatrix}
25 w(\alpha_1)^2 + 10 w(\alpha_1)w(\alpha_2) &
\sqrt{w(\alpha_1)w(\alpha_2)} (10 w(\alpha_1) + 4 w(\alpha_2))\\
\sqrt{w(\alpha_1)w(\alpha_2)} (25 w(\alpha_1) +
10 w(\alpha_2))
& 10 w(\alpha_1)w(\alpha_2)  + 4 w(\alpha_2)^2
\end{pmatrix};
\]
for $k = 1, \ldots, 4$ the product of the two matrices 
$
\begin{pmatrix}
f_{11,k} & f_{12,k}\\
f_{12,k} & f_{22,k}
\end{pmatrix}
$
and
\[
\begin{pmatrix}
w(\alpha_1)(5P_k(1) + 10P_k(\cos\frac{2\pi}{5}) + 10
P_k(\cos \frac{2\pi}{4})) & \sqrt{w(\alpha_1)w(\alpha_2)} 10
P_k(0)\\
\sqrt{w(\alpha_1)w(\alpha_2)} 10 P_k(0) & w(\alpha_2) (2P_k(1) + 2P_k(-1))
\end{pmatrix}
\]
equals zero. This linear system together with the additional assumptions
\[
0 = f_{11}(-1) = f_{12}\left(-\frac{95}{100}\right) = f'_{12}\left(-\frac{95}{100}\right)
\]
has a one-dimensional space of solutions from which it is easy to
select one which fulfills all requirements of Theorem~\ref{thm:sphere}.

For the remaining $13$ Archimedean solids in dimension $n=3$ we are
only able to show maximality of the packing associated to the
truncated octahedron, the Archimedean solid with vertex figure $(6, 6,
5)$. Its density is $0.9056\ldots$, the geometric bound shows that
the density is at most $0.9088\ldots$, and using the semidefinite
program we get $0.9079\ldots$ as an upper bound. The first packing
with caps of angles $\arcsin(1 / 3)$ and $\arcsin(1 / \sqrt{3})$ which
would be denser is obtained by taking $19$ of the smaller caps and $4$
of the bigger caps, and has density $0.9103\ldots$ The upper bounds
show however that it is not possible to obtain this dense a packing,
thus showing that the truncated octahedron packing is maximal.

\begin{figure}[htb]
\centering
\subfloat[$n=3$]{\includegraphics{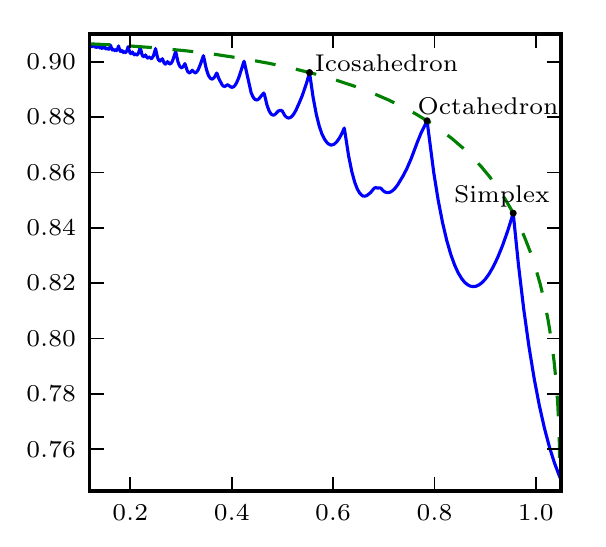}\label{fig:mono cap bounds a}}\hspace{0.0001\textwidth}
\subfloat[$n=4$]{\includegraphics{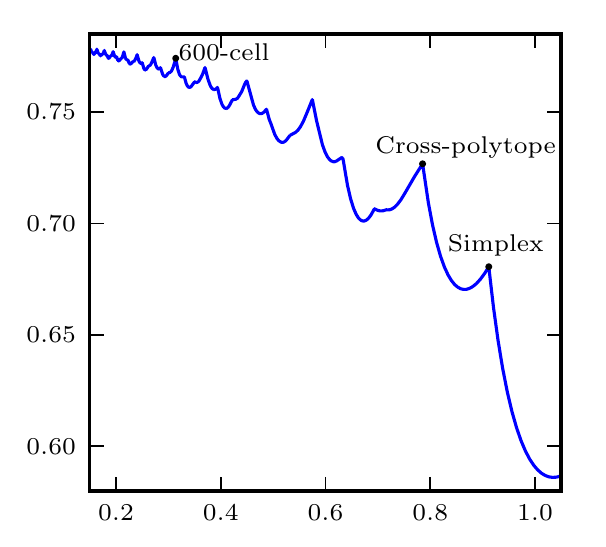}}

\subfloat[$n=5$]{\includegraphics{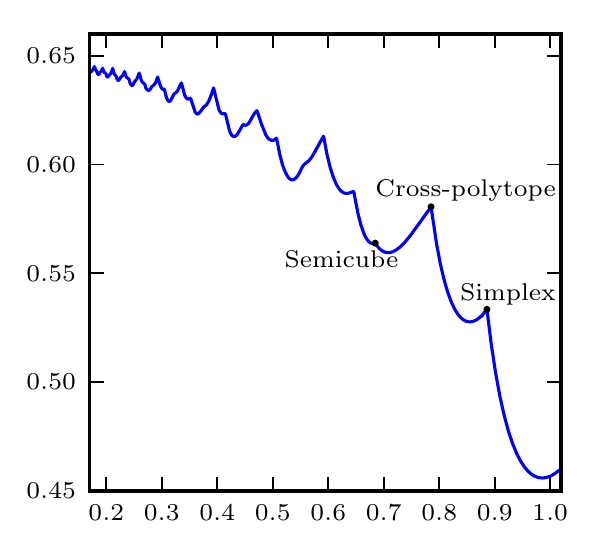}}
\caption{Upper bounds on the packing density for $N = 1$, the
  horizontal axis carries the spherical cap angle and the vertical
  axis the packing density.}
\label{fig:single sized cap bounds}
\end{figure}

We also used our programs to plot the upper bounds for $N = 1$, the
classical linear programming bound of Delsarte, Goethals, and
Seidel~\cite{DelsarteGoethalsSeidel1977}, for dimensions $n = 3$, $4$,
and~$5$ in Figure~\ref{fig:single sized cap bounds}. To the best of
our knowledge these kinds of plots were not made before and they seem
to reveal interesting properties of the bound. For better orientation
we show in the plots the packings where the linear programming bound
is sharp (cf.~Levenshtein~\cite{Levenshtein1998}; Cohn and
Kumar~\cite{CohnKumar2007} proved the much stronger statement that these
packings provide point configurations which are universally optimal). The
dotted line in the plot for $n = 3$ is the geometric bound, and since
we know that both the geometric (cf.~Florian~\cite{Florian2001}) and
the semidefinite programming bounds are sharp for the given
configurations, we know that at these peaks the bounds meet.

An interesting feature of the upper bound seems to be that it has some
periodic behavior. Indeed, the numerical results suggest that for $n =
3$, the two bounds in fact meet infinitely often as the angle
decreases, and that between any two of these meeting points the
semidefinite programming bound has a similar shape. Although in higher
dimensions we do not have a geometric bound, the semidefinite
programming bound seems to admit the same kind of periodic behavior.

\subsection{Computational results for binary sphere packings}
\label{ssec:computations space}

We applied Theorem~\ref{thm:space} to compute upper bounds for the
densities of binary sphere packings. The results we obtained are
summarized in the plot of Figure~\ref{fig:space-plots}, where we show
bounds computed for dimensions~$2$, \dots,~$5$. A detailed account of
our approach is given in Section~\ref{sec:Euclidean space
  computations}.  We now quickly discuss the bounds presented in
Figure~\ref{fig:space-plots}.

\begin{figure}[t]
\begin{center}
\includegraphics{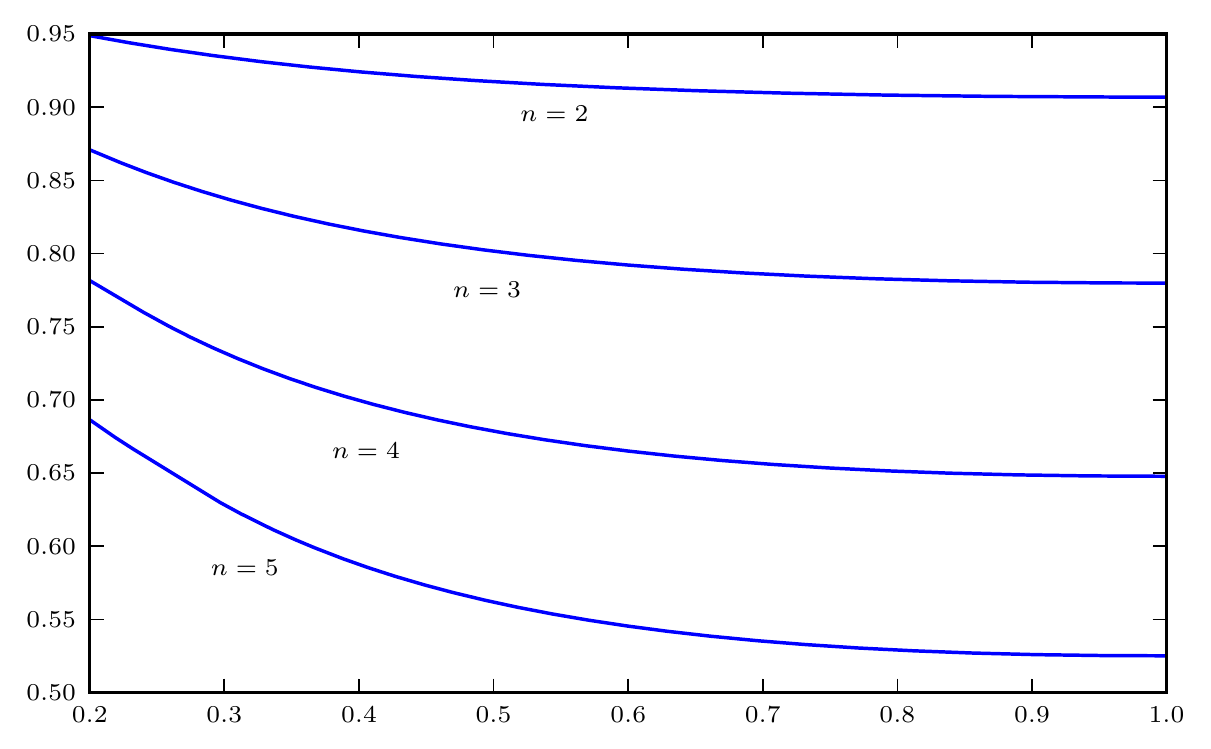}
\end{center}
\caption{The horizontal axis carries the ratio between
  the radii of the small and the large spheres. The vertical axis
  carries our upper bound. Our bounds for dimensions $2$, \dots,~$5$ are shown
  together.}
\label{fig:space-plots}
\end{figure}

\smallbreak

\noindent
\textsl{Dimension 2.}\enspace Only in dimension $2$ have binary sphere
(i.e., circle) packings been studied in depth. We refer to the
introduction in the paper of Heppes~\cite{Heppes2003} which surveys
the known results about binary circle packings in the plane.

Currently one of the best-known upper bounds for the maximum density of a binary
circle packing is due to Florian~\cite{Florian1960}. Florian's bound
states that a packing of circles in which the ratio between the radii of
the smallest and largest circles is~$r$ has density at most
\[
\frac{\pi r^2 + 2(1 - r^2) \arcsin (r / (1 + r))}{2 r \sqrt{2r + 1}},
\]
and that this bound is achieved exactly for~$r = 1$ (i.e., for
classical circle packings) and for~$r = 0$ in the limit.

The question arises of which bound is better, our bound or Florian's
bound. From our experiments, it seems that our bound is worse than
Florian's bound, at least for~$r < 1$. For instance, for~$r = 1/2$ we
obtain the upper bound~$0.9174426\ldots$, whereas Florian's bound
is~$0.9158118\ldots$ Whether this really means that the bound of
Theorem~\ref{thm:space} is worse than Florian's bound, or just that
the computational approach of Section~\ref{sec:Euclidean space computations} 
is too restrictive to attain his bound, we do not know.

It is interesting to note that for~$r = 1$, that is, for packings of
circles of one size, our bound clearly coincides with the one of Cohn
and Elkies~\cite{CohnElkies2003}. This bound seems to be equal to
$\pi/\sqrt{12}$, but no proof of this is known.
\smallbreak

\noindent
\textsl{Dimension 3.}\enspace Much less is known in dimension $3$. In
fact we do not know about other attempts to find upper bounds for the
densities of binary sphere packings in dimensions~$3$ and higher.

Let us compare our upper bound with the lower bound by Hopkins, Jiao,
Stillinger, and Torquato~\cite{HopkinsJiaoStillingerTorquato2011}. The
record holder for $r \geq 0.2$ in terms of highest density occurs for $r =
0.224744\ldots$ and its density is $0.824539\ldots$ Our computations
show that there cannot be a packing with this $r$ having density more
than $0.8617125\ldots$, so this leaves a margin of~$5\%$. 

Another interesting case is $r = \sqrt{2} - 1 = 0.414\ldots$ Here the
best-known lower bound of $0.793\ldots$ comes from the NaCl-alloy. The
large spheres are centered at a face centered cubic lattice and the
small spheres are centered at a translated copy of the face centered
cubic lattice so that they form a jammed packing. Our upper bound for
$r = \sqrt{2} - 1$ is $0.813\ldots$, less than~$3\%$ away from the
lower bound. Therefore, we believe that proving optimality of the
NaCl-alloy might be doable.
\smallbreak

\noindent
\textsl{Dimension 4 and beyond.}\enspace In higher dimensions even
less is known about binary sphere packings. We observed from
Figure~\ref{fig:space-plots} that it seems that the upper bound is
decreasing: as the radius of the small sphere increases from~$0.2$
to~$1$, the bound seems to decrease. This suggests that the bound
given by Theorem~\ref{thm:space} is decreasing in this sense, but we
do not know a proof of this.

We also do not know the limit behavior of our bound when~$r$
approaches~$0$. Due to numerical instabilities we could not perform
numerical calculations in this regime of~$r$.

\subsection{Improving the Cohn-Elkies bounds}
\label{sec:improve-ce}

We now present a theorem that can be used to find better upper bounds
for the densities of monodisperse sphere packings than those provided by Cohn and
Elkies~\cite{CohnElkies2003}; our theorem is a strengthening of theirs.

Fix~$\varepsilon > 0$. Given a packing of spheres of radius~$1/2$, we
consider its \defi{$\varepsilon$-tangency graph}, a graph whose
vertices are the spheres in the packing, and in which two vertices are
adjacent if the distance between the centers of the respective spheres
lies in the interval~$[1, 1 + \varepsilon)$.  

Let~$M(\varepsilon)$ be the least upper bound on the average degree of
the $\varepsilon$-tangency graph of any sphere packing. Our theorem is
the following:

\begin{theorem}
\label{thm:improve-ce}
Take~$0 = \varepsilon_0 < \varepsilon_1 < \cdots < \varepsilon_m$ and
let~$f\colon \R^n \to \R$ be a Schwartz function such that
\begin{enumerate}
\item[(i)] $\hat{f}(0) \geq \vol B$, where~$B$ is the ball of
  radius~$1/2$;

\item[(ii)] $\hat{f}(u) \geq 0$ for all~$u \in \R^n \setminus \{ 0
  \}$;

\item[(iii)] $f(x) \leq 0$ whenever~$\|x\| \geq 1 + \varepsilon_m$;

\item[(iv)] $f(x) \leq \eta_k$ whenever~$\|x\| \in [1 +
  \varepsilon_{k-1}, 1 + \varepsilon_k)$ with~$\eta_k \geq 0$, for~$k
  = 1$, \dots,~$m$.
\end{enumerate}
Then the density of a sphere packing is at most the optimal
value of the following linear programming problem in variables~$A_1$,
\dots,~$A_m$:
\begin{equation}
\label{eq:ce-lp}
\begin{array}{rll}
\max&f(0) + \eta_1 A_1 + \cdots + \eta_m A_m\\
&A_1 + \cdots + A_k \leq U(\varepsilon_k)&\text{for~$k = 1$,
  \dots,~$m$},\\
&A_i \geq 0&\text{for~$i = 1$, \dots,~$m$},
\end{array}
\end{equation}
where~$U(\varepsilon_k) \geq M(\varepsilon_k)$ for~$k = 1$,
\dots,~$m$.
\end{theorem}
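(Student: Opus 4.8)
The plan is to mimic the proof of Theorem~\ref{thm:theta}, now applied to a suitable finite graph extracted from the packing, but with the extra twist that edges inside the tangency band are not forbidden outright; instead their contribution is controlled by the numbers $\eta_k$ and the degree bounds $U(\varepsilon_k)$. First I would follow the Cohn--Elkies averaging setup: take a saturated packing of spheres of radius~$1/2$ (density can be assumed attained by a periodic packing, or one works with a large cube and lets it grow), restrict to the sphere centers $x_1,\dots,x_T$ lying in a big box, and consider the sum $S=\sum_{s,t} f(x_s-x_t)$. The diagonal terms give $Tf(0)$.

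Next I would split the off-diagonal terms $f(x_s-x_t)$ with $s\neq t$ according to the distance $\|x_s-x_t\|$. If $\|x_s-x_t\|\geq 1+\varepsilon_m$, condition~(iii) makes the term $\leq 0$. If $\|x_s-x_t\|\in[1+\varepsilon_{k-1},1+\varepsilon_k)$, condition~(iv) bounds the term by $\eta_k$, and the number of such ordered pairs is $T$ times (roughly) the average degree of the band-$k$ part of the $\varepsilon_k$-tangency graph; summing the bands, the total number of pairs with distance in $[1,1+\varepsilon_k)$ is at most $T\cdot M(\varepsilon_k)\leq T\cdot U(\varepsilon_k)$. Writing $A_k$ for the (limiting, normalized) number of ordered pairs at distance in band~$k$, we get $S\leq Tf(0)+T(\eta_1A_1+\cdots+\eta_mA_m)$, and the constraints $A_1+\cdots+A_k\leq U(\varepsilon_k)$ hold by definition of $M(\varepsilon_k)$, with $A_i\geq 0$ trivially. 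Taking the worst case over feasible $(A_1,\dots,A_m)$ gives $S\leq T\cdot(\text{optimal value of \eqref{eq:ce-lp}})$.

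The lower bound on $S$ is the standard Poisson-summation/positivity argument: conditions~(i) and~(ii) force $S\geq 0$, and more precisely, comparing with the uncovered-volume estimate as in Cohn--Elkies, one obtains $S\gtrsim T\cdot(\text{density})\cdot\hat f(0)/\vol B\cdot(\text{something})$—more cleanly, one shows that the density of the packing is at most $S/(T\hat f(0))\cdot\vol B$, so that combined with $\hat f(0)\geq\vol B$ and the upper bound on $S$ the density is at most the LP optimum. Concretely I would invoke the already-proved Theorem~\ref{thm:space} with $N=1$, $\Kcal_1=B$, and a function that agrees with $f$ where $f\leq 0$ but is modified on the tangency bands; however the cleanest route is simply to redo the short averaging argument directly, since Theorem~\ref{thm:space}'s condition~(iii) would demand $f\leq 0$ on the whole exterior region, which is exactly what we are relaxing.

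The main obstacle is making the bookkeeping of the band counts rigorous in the limit: one must pass from a finite box to an infinite packing and argue that the normalized pair counts $A_k$ converge (or at least that $\limsup$ of the relevant ratios satisfies the LP constraints), and that edge effects on the boundary of the box are negligible of lower order in~$T$. This is the same technical point that appears in Cohn--Elkies and is handled either by periodicity or by a Følner-type argument; I would cite the discussion in Section~\ref{sec:sphere packings} (where density is carefully defined for Theorem~\ref{thm:space}) and adapt it, the only new ingredient being that the average degree of the $\varepsilon_k$-tangency graph is, by definition of $M(\varepsilon_k)$ and the hypothesis $U(\varepsilon_k)\geq M(\varepsilon_k)$, bounded by $U(\varepsilon_k)$ in that same limiting sense.
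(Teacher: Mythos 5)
Your proposal is correct and takes essentially the same approach as the paper: for a periodic packing one forms the sum $\sum_{i,j}\sum_{v\in L} f(v+x_i-x_j)$, bounds it below via Poisson summation keeping only the $u=0$ term (using (i)--(ii)), and above by splitting off-diagonal terms into the tangency bands (using (iii)--(iv)), with the normalized band counts $A_k=|F_k|/N$ automatically LP-feasible by the definition of $M(\varepsilon_k)$. The only point you flag as delicate---the limiting bookkeeping of band counts---the paper sidesteps entirely by restricting to periodic packings and arranging (by discarding boundary points or passing to a sublattice) that all cross-cell pairs lie beyond $1+\varepsilon_m$.
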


In Section~\ref{sec:improve-ce-detail} we give a proof of
Theorem~\ref{thm:improve-ce} and show how to compute upper bounds
for~$M(\varepsilon)$ using the semidefinite programming bounds of
Bachoc and Vallentin~\cite{BachocV2008} for the sizes of spherical
codes. There we also show how to use semidefinite programming and the
same ideas we employ in the computations for binary sphere packings
(cf.~Section~\ref{sec:Euclidean space computations}) to compute better
upper bounds for the densities of sphere packings.

In Table~\ref{tab:improve-ce} we show the upper bounds obtained
through our application of Theorem~\ref{thm:improve-ce}. To better
compare our bounds with those of Cohn and Elkies, on
Table~\ref{tab:improve-ce} we show bounds for the center density of a
packing, the \defi{center density} of a packing of unit spheres being
equal to~$\Delta / \vol B$, where~$\Delta$ is the density of the
packing, and~$B$ is a unit ball.

We omit dimension~$8$ because for this dimension it is already
believed that the Cohn-Elkies bound is itself optimal, and therefore
as is to be expected we did not manage to obtain any improvement over
their bound. We also note that the bounds by Cohn and Elkies are the
best known upper bounds in all other dimensions shown.

\begin{table}[htb]
\begin{tabular}{cccc}
{\sl Dimension}&{\sl Lower bound}&{\sl Cohn-Elkies bound}&{\sl New
  upper bound}\\[2pt]
\vrule height11.5pt width0pt $4$&0.12500&0.13126&0.130587\\
$5$&0.08839&0.09975&0.099408\\
$6$&0.07217&0.08084&0.080618\\
$7$&0.06250&0.06933&0.069193\\
$9$&0.04419&0.05900&0.058951
\end{tabular}
\bigskip

\caption{For each dimension we show the
  best lower bound known, the bound by Cohn and Elkies~\cite{CohnElkies2003},
  and the upper bound coming from Theorem~\ref{thm:improve-ce}.}
\label{tab:improve-ce}
\end{table}

In dimension~$3$ the Cohn-Elkies bound is $0.18616$ whereas the
optimal sphere packing has center density $0.17678$. We can improve
the Cohn-Elkies bound to $0.184559$ which is also better than the
upper bound~$0.1847$ due to Rogers~\cite{Rogers58}.

\section{Multiple-size spherical cap packings}
\label{sec:spherical cap packings}
In this section we prove Theorem~\ref{thm:sphere} and discuss its
relation to an extension of the weighted theta prime number for the
spherical cap packing graph.

\subsection{Proof of Theorem~\ref{thm:sphere}}

Let~$x_1$, \dots,~$x_m \in S^{n-1}$ and~$r\colon \{ 1, \ldots, m \}
\to \{ 1, \ldots, N \}$ be such that
\[
\bigcup_{i=1}^m C(x_i, \alpha_{r(i)})
\]
is a packing of spherical caps on~$S^{n-1}$.

Consider the sum
\begin{equation}
\label{eq:sphere-sum}
\sum_{i, j = 1}^m w(\alpha_{r(i)})^{1/2} w(\alpha_{r(j)})^{1/2} f_{r(i)r(j)}(x_i \cdot x_j).
\end{equation}
By expanding~$f_{r(i)r(j)}(x_i \cdot x_j)$ according to \eqref{eq:spheref} this sum is equal to
\[
\sum_{k=0}^\infty \sum_{i,j=1}^m w(\alpha_{r(i)})^{1/2}
w(\alpha_{r(j)})^{1/2} f_{r(i) r(j), k} P_k^n(x_i \cdot x_j).
\]
By the addition formula (cf.~e.g.~Section~9.6 of Andrews, Askey, and
Roy~\cite{AndrewsAskeyRoy1999}) for the Jacobi polynomials $P_k^n$ the
matrix~$\bigl(P_k^n(x_i \cdot x_j)\bigr)_{i,j=1}^m$ is positive
semidefinite. From condition~\hyperref[cond:sphere2]{(ii)} of the
theorem, we also know that the matrix~$\bigl(f_{r(i) r(j),
  k}\bigr)_{i,j=1}^m$ is positive semidefinite for~$k \geq 1$. So the
inner sum above is nonnegative for $k \geq 1$. If we then consider
only the summand for~$k = 0$ we see that~\eqref{eq:sphere-sum} is
at least
\begin{equation}
\label{eq:sphere-lb}
\sum_{i,j=1}^m w(\alpha_{r(i)})^{1/2} w(\alpha_{r(j)})^{1/2} f_{r(i) r(j), 0} P_0^n(x_i \cdot x_j) \geq \biggl(\sum_{i=1}^m  w(\alpha_i)\biggr)^2,
\end{equation}
where the inequality follows from condition~\hyperref[cond:sphere1]{(i)} of the theorem.

Now, notice that whenever~$i \neq j$, the caps~$C(x_i, \alpha_{r(i)})$
and~$C(x_j, \alpha_{r(j)})$ have disjoint 
interiors. Condition~\hyperref[cond:sphere3]{(iii)} then implies that~$f_{r(i) r(j)}(x_i \cdot
x_j) \leq 0$. So we see that~\eqref{eq:sphere-sum} is at most
\begin{equation}
\label{eq:sphere-ub}
\sum_{i=1}^m w(\alpha_i) f_{r(i) r(i)}(1) \leq 
\max\{\,f_{ii}(1) : \text{$i = 1$, \dots,~$N$}\,\} \sum_{i=1}^m w(\alpha_i).
\end{equation}

So~\eqref{eq:sphere-sum} is at least~\eqref{eq:sphere-lb} and
at most~\eqref{eq:sphere-ub}, yielding
\[
\sum_{i=1}^m w(\alpha_i) \leq \max\{\,f_{ii}(1) : \text{$i = 1$,
  \dots,~$N$}\,\}. \qquad  \qed
\]

\subsection{Theorem~\ref{thm:sphere} and the Lov\'asz theta number}
\label{ssec:generalization}

We now briefly discuss a generalization of~$\vartheta'_w$ to infinite
graphs and its relation to the bound of
Theorem~\ref{thm:sphere}. Similar ideas were developed by Bachoc,
Nebe, Oliveira, and Vallentin~\cite{BachocNebeOliveiraVallentin2009}.

Let $G = (V, E)$ be a graph, where~$V$ is a compact space, and
let~$w\colon V \to \R_{\geq 0}$ be a continuous weight function. An
element in the space $\mathcal{C}(V \times V)$ of real-valued
continuous functions over~$V \times V$ is called a \defi{kernel}. A
kernel~$K$ is \defi{symmetric} if $K(x, y) = K(y, x)$ for all $x, y
\in V$. It is \defi{positive} if it is symmetric and if for any $N \in
\mathbb{N}$ and for any $x_1$,~\dots,~$x_N \in V$, the matrix~$\bigl(K(x_i, x_j)\bigr)_{i, j =
  1}^N$ is positive semidefinite. The \defi{weighted theta prime number}
of~$G$ is defined as
\begin{equation}
\label{eq:generalized theta}
\begin{array}{rll}
\vartheta'_w(G) = \inf&M\\
&K - w^{1/2} \otimes (w^{1/2})^*&\text{is a positive kernel},\\
&K(x,x) \leq M&\text{for all $x \in V$},\\
&K(x,y) \leq 0&\text{for all $\{x,y\} \not\in E$ where $x \neq
  y$},\\
&\text{$M \in \R$, $K \in \mathcal{C}(V \times V)$ is symmetric}.\span\\
\end{array}
\end{equation}
One may show, mimicking the proof of Theorem~\ref{thm:theta},
that~$\vartheta'_w(G) \geq \alpha_w(G)$.

Let $G = (V, E)$ be the spherical cap packing graph as defined in
Section~\ref{sec:methods and theorems}. We will use the symmetry of
this graph to show that~\eqref{eq:generalized theta} gives the
sharpest bound obtainable by Theorem~\ref{thm:sphere}.

The orthogonal group~$\ort(n)$ acts on~$S^{n-1}$, and this
defines the action of~$\ort(n)$ on the vertex set $V = S^{n-1} \times
\{1, \ldots, N\}$ by $A(x, i) = (Ax, i)$ for~$A \in \ort(n)$. The
group average of a kernel~$K \in \mathcal{C}(V \times V)$ is given by
\[
\overline{K}((x, i), (y, j)) = \int_{\ort(n)} K(A(x, i), A(y, j))\, d\mu(A), 
\]
where $\mu$ is the Haar measure on~$\ort(n)$ normalized so
that~$\mu(\ort(n)) = 1$. If $(K, M)$ is feasible for
\eqref{eq:generalized theta}, then $(\overline{K}, M)$ is feasible
too. This follows since for each $A \in \ort(n)$, a point $(x, i)$ has
the same weight as $A(x, i)$, and two points $(x, i)$ and $(y, j)$ are
adjacent if and only if $A(x, i)$ and $A(y, j)$ are adjacent. Since
$(K, M)$ and $(\overline{K}, M)$ have the same objective value~$M$, and
since $\overline{K}$ is invariant under the action of $\ort(n)$, we
may restrict to $\ort(n)$-invariant kernels (i.e., kernels~$K$ such
that~$K(Au,Av) = K(u,v)$ for all~$A \in \ort(n)$ and~$u$, $v \in V$)
in finding the infimum of~\eqref{eq:generalized theta}.

Schoenberg~\cite{Schoenberg1942} showed that a symmetric kernel~$K \in
\mathcal{C}(S^{n-1} \times S^{n-1})$ is positive and $\ort(n)$-invariant
if and only if it lies in the cone spanned by the kernels $(x, y)
\mapsto P_k^n(x \cdot y)$. We will use the following generalization for kernels over~$V \times V$.

\begin{theorem}\label{lemma:pos inv kernel}
  A symmetric kernel $K \in \mathcal{C}(V \times V)$, with $V =
  S^{n-1} \times \{1, \ldots, N\}$, is positive and
  $\ort(n)$-invariant if and only if
\begin{equation}\label{invpos}
K((x, i), (y, j)) = f_{ij}(x \cdot y)
\end{equation}
with
\[
f_{ij}(u) =  \sum_{k=0}^\infty f_{ij,k} P_k^n(u),
\]
where $\bigl(f_{ij,k}\bigr)_{i, j = 1}^N$ is positive semidefinite
for all $k \geq 0$ and~$\sum_{k=0}^\infty |f_{ij,k}| < \infty$ for
all~$i$, $j = 1$, \dots,~$N$, implying in particular that we have
uniform convergence above.
\end{theorem}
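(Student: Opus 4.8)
The plan is to prove both directions of the equivalence, using Schoenberg's theorem on $S^{n-1}$ as the engine and handling the discrete factor $\{1,\dots,N\}$ by a blocking/tensor argument. For the ``if'' direction, I would start from a kernel of the form \eqref{invpos} with $f_{ij}(u) = \sum_k f_{ij,k} P_k^n(u)$ and the matrices $(f_{ij,k})_{i,j}$ positive semidefinite. Uniform convergence of the series follows immediately from $\sum_k |f_{ij,k}| < \infty$ together with $|P_k^n(u)| \le P_k^n(1) = 1$ for $u \in [-1,1]$, so $K$ is continuous. Symmetry is clear since $f_{ij,k} = f_{ji,k}$ (a consequence of the matrices being symmetric). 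For $\ort(n)$-invariance, note $K(A(x,i),A(y,j)) = f_{ij}((Ax)\cdot(Ay)) = f_{ij}(x\cdot y)$. For positivity, fix points $(x_1,i_1),\dots,(x_m,i_m) \in V$; the Gram-type matrix is $\bigl(f_{i_a i_b}(x_a\cdot x_b)\bigr)_{a,b=1}^m = \sum_{k=0}^\infty \bigl(f_{i_a i_b, k}\,P_k^n(x_a\cdot x_b)\bigr)_{a,b}$. Each summand is the Schur (entrywise) product of $\bigl(f_{i_a i_b,k}\bigr)_{a,b}$ — positive semidefinite because it is a principal submatrix (with repeats) of the PSD matrix $(f_{ij,k})_{i,j}$ — and $\bigl(P_k^n(x_a\cdot x_b)\bigr)_{a,b}$, which is positive semidefinite by the addition formula for Jacobi polynomials (already invoked in the proof of Theorem~\ref{thm:sphere}). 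By the Schur product theorem each term is PSD, and a uniformly convergent sum of PSD matrices is PSD, so $K$ is positive.

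For the ``only if'' direction, suppose $K \in \mathcal{C}(V\times V)$ is symmetric, positive, and $\ort(n)$-invariant. Since $\ort(n)$ acts only on the $S^{n-1}$ coordinate and fixes the labels, invariance forces $K((x,i),(y,j))$ to depend on $x,y$ only through an $\ort(n)$-invariant function; because $\ort(n)$ acts transitively on pairs of points with a given inner product, there exist continuous functions $f_{ij}\colon [-1,1]\to\R$ with $K((x,i),(y,j)) = f_{ij}(x\cdot y)$, and $f_{ij} = f_{ji}$ by symmetry. For each fixed pair $(i,j)$ the kernel $(x,y)\mapsto f_{ij}(x\cdot y)$ need not itself be positive, but I would extract positivity of the full matrix-valued object as follows: for any $c_1,\dots,c_N \in \R$, the kernel $(x,y)\mapsto \sum_{i,j} c_i c_j f_{ij}(x\cdot y)$ is a positive $\ort(n)$-invariant kernel on $S^{n-1}\times S^{n-1}$ (test $K$ on points $(x_a, i)$ with multiplicities weighted by the $c_i$), so Schoenberg's theorem gives $\sum_{i,j} c_i c_j f_{ij}(u) = \sum_k \lambda_k(c)\, P_k^n(u)$ with $\lambda_k(c) \ge 0$. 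By orthogonality of the $P_k^n$ against the Jacobi weight, the $k$-th coefficient of $\sum_{i,j} c_i c_j f_{ij}$ is exactly $\sum_{i,j} c_i c_j f_{ij,k}$ where $f_{ij,k}$ is the $k$-th Jacobi coefficient of $f_{ij}$; hence $\sum_{i,j} c_i c_j f_{ij,k} = \lambda_k(c) \ge 0$ for all $c$, i.e. $(f_{ij,k})_{i,j}$ is positive semidefinite for every $k \ge 0$.

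It remains to establish the absolute summability $\sum_k |f_{ij,k}| < \infty$ and uniform convergence. Here I would use that $(f_{ij,k})_{i,j} \succeq 0$ implies $|f_{ij,k}| \le \tfrac12(f_{ii,k} + f_{jj,k})$ (since $2|f_{ij,k}| \le f_{ii,k}+f_{jj,k}$ from the $2\times 2$ minor and $f_{ii,k}, f_{jj,k} \ge 0$), so it suffices to control the diagonal series $\sum_k f_{ii,k}$. For each $i$ the diagonal kernel $(x,y)\mapsto f_{ii}(x\cdot y)$ is a positive $\ort(n)$-invariant kernel on the sphere, so by Schoenberg together with continuity at $u=1$ one has $\sum_k f_{ii,k} = f_{ii}(1) < \infty$ (a Dini-type / Abel-summability argument: the partial sums of nonnegative terms evaluated at $u=1$ are bounded by $f_{ii}(1)$, and equality holds by continuity). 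Absolute summability then yields uniform convergence of each $f_{ij}$ via $|P_k^n|\le 1$ as before. I expect the main obstacle to be this last step — rigorously passing from ``positive definite on the sphere'' to the convergence of the coefficient series at the endpoint $u = 1$ — since it is the one place the argument genuinely uses compactness of $S^{n-1}$ and the analytic structure of Schoenberg's theorem rather than just linear algebra; the cleanest route is probably to cite the classical fact that for a continuous positive definite function on $S^{n-1}$ the Schoenberg coefficients are nonnegative and sum to the value at $1$, which is standard but should be referenced (e.g. alongside the citation to Schoenberg~\cite{Schoenberg1942} already in the text).
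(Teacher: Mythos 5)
Your proposal is correct, but it takes a genuinely different route from the paper's proof. The paper works on the level of the function space $\mathcal{C}(V)$: it decomposes $\mathcal{C}(V)$ orthogonally into $\ort(n)$-irreducibles $H_{k,i}$, observes that $H_{k,i}$ and $H_{k',i'}$ are equivalent exactly when $k = k'$, and then invokes Bochner's characterization of positive invariant kernels (citing \cite{Bochner1941}) to get the expansion \eqref{eq:K}, finally converting it into the desired Jacobi form via the addition formula; a remark is added to explain why Bochner's theorem still applies even though $\ort(n)$ does not act transitively on $V$. You instead bypass the representation theory and reduce directly to Schoenberg's scalar theorem on $S^{n-1}$ via a polarization argument: for each $c \in \R^N$ the contracted kernel $\sum_{i,j} c_i c_j f_{ij}(x\cdot y)$ is positive (tested against the $mN$ points $(x_a,i)$ with weights $\lambda_a c_i$), Schoenberg gives nonnegative and summable Jacobi coefficients $\lambda_k(c)$, and linearity of the Fourier--Jacobi map identifies $\lambda_k(c) = \sum_{i,j} c_i c_j f_{ij,k}$, yielding positive semidefiniteness of each $(f_{ij,k})_{i,j}$. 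For the converse you correctly combine the Schur product theorem with the addition formula and closedness of the PSD cone. Your absolute-summability step via $|f_{ij,k}| \le \tfrac12(f_{ii,k}+f_{jj,k})$ and the diagonal case of Schoenberg is also sound, though you slightly overcomplicate it by invoking a separate Dini/Abel argument --- Schoenberg's theorem already delivers $\sum_k f_{ii,k} < \infty$ as part of its statement. The trade-off: your route is more elementary and self-contained (Schoenberg plus finite-dimensional linear algebra), while the paper's Bochner-based argument is more structural and extends more readily to other symmetry groups and non-scalar isotypic multiplicities; for the compact two-point-homogeneous setting at hand, both are perfectly adequate.
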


Before we prove the theorem we apply it to simplify
problem~\eqref{eq:generalized theta}.  If~$K$ is an
$\ort(n)$-invariant feasible solution of~\eqref{eq:generalized theta},
then~$K - w^{1/2} \otimes (w^{1/2})^*$ is a positive
$\ort(n)$-invariant kernel, and hence can be written in the
form~\eqref{invpos}. Using in addition that $P_0^n = 1$,
problem~\eqref{eq:generalized theta} reduces to
\[
\begin{array}{rll}
\vartheta'_w(G) = \inf&M\\
&f_{ii}(0) + w(\alpha_i) \leq M&\text{for all $1 \leq i \leq N$},\\
&f_{ij}(u) + (w(\alpha_i) w(\alpha_j))^{1/2} \leq 0&\text{when
  $-1 \leq u \leq \cos(\alpha_i + \alpha_j)$},\\
&\text{$M \in \R$ and $\bigl(f_{ij,k}\bigr)_{i, j = 1}^N$ positive semidefinite
  for all $k \geq 0$}.\span
\end{array}
\]
By substituting $f_{ij,0} - (w(\alpha_i) w(\alpha_j))^{1/2}$ for
$f_{ij,0}$ we see that the solution to this problem indeed equals the
sharpest bound given by Theorem~\ref{thm:sphere}.

\begin{proof}[Proof of Theorem~\ref{lemma:pos inv kernel}]\label{proof:pos inv kernel}
  If we endow the space $\mathcal{C}(S^{n-1})$ of real-valued
  continuous function on the unit sphere $S^{n-1}$ with the usual
  $L^2$ inner product, then for~$f$, $g \in \Ccal(V)$,
\[
\langle f, g \rangle = \sum_{i=1}^N \int_{S^{n-1}} f(x, i) g(x, i)\, d\omega(x)
\]
gives an inner product on $\mathcal{C}(V)$. The space $\mathcal{C}(S^{n-1})$
decomposes orthogonally as
\[
\mathcal{C}(S^{n-1}) =  \bigoplus_{k=0}^\infty H_k,
\]
where $H_k$ is the space of homogeneous harmonic polynomials of degree $k$ restricted
to $S^{n-1}$. With
\[
H_{k, i} = \{\, f \in \mathcal{C}(V) : \text{there is a $g \in H_k$
  such that $f(\cdot, j) = \delta_{ij} g(\cdot)$}\,\},
\]
it follows that $\mathcal{C}(V)$ decomposes orthogonally as
\[
\mathcal{C}(V) = \bigoplus_{k=0}^\infty \bigoplus_{i=1}^N H_{k, i}.
\]
Given the action of $\ort(n)$ on $V$, we have the natural unitary
representation on $\mathcal{C}(V)$ given by $(Af)(x,i) = f(A^{-1}x,
i)$ for $A \in \ort(n)$ and $f \in \mathcal{C}(V)$.  It follows that each
space $H_{k, i}$ is $\ort(n)$-irreducible and that two spaces $H_{k, i}$
and $H_{k', i'}$ are $\ort(n)$-equivalent if and only if $k = k'$. Let
\[
\{\, e_{k, i, l} : \text{$k \geq 0$, $1 \leq i \leq N$, and $1 \leq l \leq h_k$} \,\}
\]
be a complete orthonormal system of
$\mathcal{C}(V)$ such that $e_{k, i, 1},$ \dots,~$e_{k, i, h_k}$ is a
basis of $H_{k, i}$. By Bochner's characterization \cite{Bochner1941},
a kernel $K \in \mathcal{C}(V \times V)$ is positive and
$\ort(n)$-invariant if and only if
\begin{equation}
\label{eq:K}
K((x, i), (y, j)) = \sum_{k = 0}^\infty \sum_{i', j' = 1}^N f_{ij,k} \sum_{l=1}^{h_k} e_{k, i', l}(x, i) e_{k, j', l}(y, j), 
\end{equation}
where each $\bigl(f_{ij,k}\bigr)_{i, j = 1}^N$ is positive
semidefinite and~$\sum_{k=0}^\infty |f_{ij,k}| < \infty$ for all~$i$,
$j$.

By the addition formula (cf.~Chapter~9.6 of Andrews, Askey, and
Roy~\cite{AndrewsAskeyRoy1999}) we have
\[
\sum_{l=1}^{h_k} e_{k, l}(x) e_{k, l}(y)
=
\frac{h_k}{\omega_n(S^{n-1})}
 P_k^n(x \cdot y)
\]
for any orthonormal basis $e_{k, 1}, \ldots, e_{k, h_k}$ of $H_k$. It follows that 
\[
\sum_{l=1}^{h_k}e_{k, i', l}(x, i) e_{k, j', l}(y, j) = \delta_{ii'}
\delta_{jj'} \frac{h_k}{\omega_n(S^{n-1})} P_k^n(x \cdot y),
\]
and substituting this into~\eqref{eq:K} completes the proof.
\end{proof}

Bochner's characterization for the kernel~$K$, which we used above,
usually assumes that the spaces under consideration are homogeneous,
so that the decompositions into isotypic irreducible spaces are
guaranteed to be finite. This finiteness is then used to conclude
uniform convergence. Since the action of $\ort(n)$ on $V$ is not
transitive, we do not immediately have this guarantee. We can still
use the characterization, however, since irreducible subspaces of
$\mathcal{C}(V)$ have finite multiplicity.

\section{Translational packings of bodies and multiple-size sphere  packings}
\label{sec:sphere packings}

Before giving a proof of Theorem~\ref{thm:space} we quickly present
some technical considerations regarding density. Here we follow
closely Appendix~A of Cohn and Elkies~\cite{CohnElkies2003}.

Let~$\Kcal_1$, \dots,~$\Kcal_N$ be convex bodies and~$\Pcal$ be a
packing of translated copies of $\Kcal_1$,~\dots,~$\Kcal_N$, that
is,~$\Pcal$ is a union of translated copies of the bodies, any two
copies having disjoint interiors. We say that the \defi{density}
of~$\Pcal$ is~$\Delta$ if for all~$p \in \R^n$ we have
\[
\Delta = \lim_{r \to \infty} \frac{\vol(B(p, r) \cap \Pcal)}{\vol B(p,
  r)},
\]
where~$B(p, r)$ is the ball of radius~$r$ centered at~$p$. Not every
packing has a density, but every packing has an \defi{upper density}
given by
\[
\limsup_{r \to \infty} \sup_{p \in \R^n} \frac{\vol(B(p, r) \cap \Pcal)}{\vol B(p,
  r)}.
\]

We say that a packing~$\Pcal$ is \defi{periodic} if there is a lattice~$L
\subseteq \R^n$ that leaves~$\Pcal$ invariant, that is, which is such
that~$\Pcal = x + \Pcal$ for all~$x \in L$. In other words, a periodic
packing consists of some translated copies of the bodies~$\Kcal_1$,
\dots,~$\Kcal_N$ arranged inside the fundamental parallelotope of~$L$,
and this arrangement repeats itself at each copy of the fundamental
parallelotope translated by vectors of the lattice.

It is easy to see that a periodic packing has a density. This is
particularly interesting for us, since in computing upper bounds for
the maximum possible density of a packing we may restrict ourselves to
periodic packings, as it is known (and easy to see) that the supremum
of the upper densities of packings is also achieved by periodic
packings (cf.~Appendix~A in Cohn and Elkies~\cite{CohnElkies2003}).

To provide a proof of the theorem we need another fact from harmonic
analysis, the Poisson summation formula. Let~$f\colon \R^n \to \C$ be
a Schwartz function and~$L \subseteq \R^n$ be a lattice. The
\defi{Poisson summation formula} states that, for every~$x \in \R^n$,
\[
\sum_{v \in L} f(x + v) = \frac{1}{\vol(\R^n / L)} \sum_{u \in L^*}
\hat{f}(u) e^{2\pi i u \cdot x},
\]
where~$L^* = \{\, u \in \R^n : \text{$u \cdot x \in \Z$ for all~$x
  \in L$}\,\}$ is the \defi{dual lattice} of~$L$ and where $\vol(\R^n
/ L)$ is the volume of a fundamental domain of the lattice~$L$.

\begin{proof}[Proof of Theorem~\ref{thm:space}]
As observed above, we may restrict ourselves to periodic packings. Let
then~$L \subseteq \R^n$ be a lattice and~$x_1$, \dots,~$x_m
\in \R^n$ and~$r\colon \{ 1, \ldots, m \} \to \{ 1, \ldots, N \}$ be
such that
\[
\Pcal = \bigcup_{v \in L} \bigcup_{i=1}^m v + x_i + \Kcal_{r(i)}
\]
is a packing. This means that, whenever~$i \neq j$ or~$v \neq 0$,
bodies~$x_i + \Kcal_{r(i)}$ and~$v + x_j + \Kcal_{r(j)}$ have disjoint
interiors. This packing is periodic and therefore has a well-defined
density, which equals
\[
\frac{1}{\vol(\R^n / L)} \sum_{i=1}^m \vol \Kcal_{r(i)}.
\]

Consider the sum
\begin{equation}
\label{eq:space-sum}
\sum_{v \in L} \sum_{i,j = 1}^m (\vol \Kcal_{r(i)})^{1/2}
(\vol \Kcal_{r(j)})^{1/2} f_{r(i)r(j)}(v + x_j - x_i).
\end{equation}
Applying the Poisson summation formula we may
express~\eqref{eq:space-sum} in terms of Fourier transform
of~$f$, obtaining
\[
\frac{1}{\vol(\R^n / L)}\sum_{u \in L^*} \sum_{i,j = 1}^m
(\vol K_{r(i)})^{1/2}(\vol K_{r(j)})^{1/2} \hat{f}_{r(i)r(j)}(u) e^{2\pi i u
  \cdot (x_j - x_i)},
\]
where~$L^*$ is the dual lattice of~$L$.

Since~$f$ satisfies condition~(ii) of the theorem, 
matrix~$\bigl(\hat{f}_{r(i) r(j)}(u)\bigr)_{i,j=1}^m$ is positive
semidefinite for every~$u \in \R^n$. So the inner sum above is always
nonnegative. If we then consider only the summand for~$u = 0$, we see
that~\eqref{eq:space-sum} is at least
\begin{equation}
\label{eq:space-lower}
\begin{split}
&\frac{1}{\vol(\R^n / L)}\sum_{i,j=1}^m (\vol \Kcal_{r(i)})^{1/2}(\vol \Kcal_{r(j)})^{1/2}\hat{f}_{r(i)
  r(j)}(0)\\
&\qquad{}\geq \frac{1}{\vol(\R^n / L)}\sum_{i, j = 1}^m \vol
\Kcal_{r(i)} \vol \Kcal_{r(j)}\\
&\qquad{}= \frac{1}{\vol(\R^n / L)}\biggl(\sum_{i=1}^m \vol \Kcal_{r(i)}\biggr)^2,
\end{split}
\end{equation}
where the inequality comes from condition~(i) of the theorem.

Now, notice that whenever~$v \neq 0$ or~$i \neq j$ one
has~$f_{r(i)r(j)}(v + x_j - x_i) \leq 0$. Indeed, since~$\Pcal$ is a
packing, if~$v \neq 0$ or~$i \neq j$ then the bodies~$x_i +
\Kcal_{r(i)}$ and~$v + x_j + \Kcal_{r(j)}$ have disjoint
interiors. But then also~$\Kcal_{r(i)}$ and~$v + x_j - x_i +
\Kcal_{r(j)}$ have disjoint interiors, and then from~(iii) we see
that~$f_{r(i) r(j)}(v + x_j - x_i) \leq 0$.

From this observation we see immediately that~\eqref{eq:space-sum} is
at most
\begin{equation}
\label{eq:space-upper}
\sum_{i=1}^m \vol \Kcal_{r(i)} f_{r(i) r(i)}(0) \leq \max\{\,
f_{ii}(0) : \text{$i = 1$, \dots,~$N$}\,\} \sum_{i=1}^m
\vol \Kcal_{r(i)}.
\end{equation}

So~\eqref{eq:space-sum} is at least~\eqref{eq:space-lower} and
at most~\eqref{eq:space-upper}. Putting it all together we get
that
\[
\frac{1}{\vol(\R^n / L)} \sum_{i=1}^m \vol \Kcal_{r(i)} \leq \max\{\,
f_{ii}(0) : \text{$i = 1$, \dots,~$N$}\,\},
\]
proving the theorem.
\end{proof}

We mentioned in the beginning of the section that
Theorem~\ref{thm:space} is an analogue of the weighted theta prime
number for a certain infinite graph. The connection will become more
clear after we present a slightly more general version of
Theorem~\ref{thm:space}.

An~$L^\infty$ function~$f\colon \R^n \to \C^{N \times N}$ is said to
be of \defi{positive type} if~$f(x) = f(-x)^*$ for all~$x \in \R^n$
and for all~$L^1$ functions~$\rho\colon \R^n \to \C^N$ we have
\[
\int_{\R^n} \int_{\R^n} \rho(y)^* f(x - y) \rho(x)\, dx dy \geq 0.
\]

When~$N = 1$ we have the classical theory of functions of positive
type (see e.g.~the book by Folland~\cite{Folland1995} for
background). Many useful properties of such functions can
be extended to the matrix-valued case (that is, to the~$N > 1$ case)
via a simple observation: a function~$f\colon \R^n \to \C^{N \times
  N}$ is of positive type if and only if for all~$p \in \C^N$ the
function~$g_p\colon \R^n \to \C$ such that
\[
g_p(x) = p^* f(x) p
\]
is of positive type.

From this observation two useful classical characterizations of
functions of positive type can be extended to the matrix-valued
case. The first one is useful when dealing with continuous functions
of positive type. It states that a continuous and bounded
function~$f\colon \R^n \to \C^{N \times N}$ is of positive type if and
only if for every choice~$x_1$, \dots,~$x_m$ of finitely many points in~$\R^n$, the
block matrix~$\bigl(f(x_i - x_j)\bigr)_{i, j = 1}^m$ is positive
semidefinite. 

The second characterization is given in terms of the Fourier
transform. It states that an~$L^1$ function~$f\colon \R^n \to \C^{N
  \times N}$ is of positive type if and only if the
matrix~$\bigl(\hat{f}_{ij}(u)\bigr)_{i,j=1}^N$ is positive
semidefinite for all~$u \in \R^n$. So in the statement of
Theorem~\ref{thm:space}, for instance, one could replace condition~(i)
by the equivalent condition that~$f$ be a function of positive type.

When~$N = 1$, the previous two characterizations of functions of
positive type date back to Bochner~\cite{Bochner1932}.

With this we may give an alternative and more general version of
Theorem~\ref{thm:space}.

\begin{theorem}
\label{thm:space-general}
Let~$\Kcal_1$, \dots,~$\Kcal_N$ be convex bodies in~$\R^n$ and
let~$f\colon \R^n \to \R^{N \times N}$ be a continuous and~$L^1$
function. Suppose~$f$ satisfies the following conditions:
\begin{enumerate}
\item[(i)] the matrix~$\bigl(\hat{f}_{ij}(0) - (\vol \Kcal_i)^{1/2}
  (\vol \Kcal_j)^{1/2}\bigr)_{i,j=1}^N$ is positive semidefinite;

\item[(ii)] $f$ is of positive type;

\item[(iii)] $f_{ij}(x) \leq 0$ whenever~$\Kcal_i^\circ \cap (x
  + \Kcal_j^\circ) = \emptyset$.
\end{enumerate}
Then the density of every packing of translates of~$\Kcal_1$,
\dots,~$\Kcal_N$ in the Euclidean space~$\R^n$ is at most~$\max\{\,
f_{ii}(0) : \text{$i = 1$, \dots,~$N$}\,\}$.
\end{theorem}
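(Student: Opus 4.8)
The plan is to repeat, essentially verbatim, the argument used to prove Theorem~\ref{thm:space}, the only change being that Poisson summation must now be invoked for a function that is merely continuous and integrable rather than Schwartz. As before, I would first use the fact (recalled in Section~\ref{sec:sphere packings}) that the supremum of the upper densities of all packings is attained by periodic packings, so that it suffices to bound the density $\frac{1}{\vol(\R^n/L)}\sum_{i=1}^m\vol\Kcal_{r(i)}$ of an arbitrary periodic packing $\Pcal=\bigcup_{v\in L}\bigcup_{i=1}^m(v+x_i+\Kcal_{r(i)})$. Then I would consider the same double sum
\[
\sum_{v\in L}\sum_{i,j=1}^m(\vol\Kcal_{r(i)})^{1/2}(\vol\Kcal_{r(j)})^{1/2}f_{r(i)r(j)}(v+x_j-x_i)
\]
and estimate it from two sides: from above by $\max_i f_{ii}(0)\cdot\sum_{i=1}^m\vol\Kcal_{r(i)}$, using condition~(iii) and the fact that any two distinct bodies of a packing have disjoint interiors (so every term with $v\neq0$ or $i\neq j$ is nonpositive); and from below by $\frac{1}{\vol(\R^n/L)}\bigl(\sum_{i=1}^m\vol\Kcal_{r(i)}\bigr)^2$, by applying the Poisson summation formula to each $f_{r(i)r(j)}$, discarding all frequencies $u\in L^*\setminus\{0\}$ (which contribute nonnegatively because $f$ is of positive type, hence $(\hat f_{ij}(u))_{i,j=1}^N$ is positive semidefinite), and invoking condition~(i) for the remaining $u=0$ term. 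Comparing the two estimates and cancelling $\sum_{i=1}^m\vol\Kcal_{r(i)}$ gives the theorem.

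The only genuinely new ingredient, and the step I expect to be the main obstacle, is justifying the Poisson summation formula for a continuous integrable function $g$ of positive type; in Theorem~\ref{thm:space} this was available for free since $g$ was Schwartz. Here the positive-type hypothesis is exactly what makes this work. Indeed, from continuity, integrability, and positivity of type one gets that $\hat g\ge0$ (and more generally that each matrix $(\hat f_{ij}(u))_{i,j=1}^N$ is positive semidefinite, as recalled before the statement of the theorem) and that $\hat g$ is continuous and integrable with $\int_{\R^n}\hat g(u)\,du=g(0)$. The point I would have to nail down is that nonnegativity and integrability of $\hat g$ force $\sum_{u\in L^*}\hat g(u)<\infty$ for every lattice $L$; granting this, the periodization $\sum_{v\in L}g(\,\cdot+v)$ has an absolutely convergent Fourier expansion, the two sides of the Poisson summation formula make sense and agree, and the interchange of summation with the Fourier transform needed above is legitimate. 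For the off-diagonal functions $f_{ij}$ one then deduces absolute convergence of $\sum_{u\in L^*}\hat f_{ij}(u)$ from the pointwise bound $|\hat f_{ij}(u)|\le(\hat f_{ii}(u)\hat f_{jj}(u))^{1/2}$, which is a consequence of positive semidefiniteness of $(\hat f_{kl}(u))_{k,l=1}^N$.

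As an alternative I would keep in mind a direct reduction to Theorem~\ref{thm:space} by approximating $f$ with Schwartz matrix-valued functions. Such an approximation is delicate because preserving condition~(iii) under mollification forces the mollifier to be nonnegative and compactly supported, while preserving condition~(ii) forces its Fourier transform to be nonnegative; one reconciles this by mollifying with an autocorrelation $\xi*\widetilde\xi$ of a nonnegative compactly supported smooth function $\xi$, which is nonnegative, compactly supported, and has nonnegative Fourier transform. Slightly dilating each body $\Kcal_i$ makes room for condition~(iii) under this mollification (the densest-packing problem is scale invariant, so this does not change the bound to be proved), multiplying by a suitable broad Gaussian (whose Fourier transform is a narrow nonnegative bump) turns the mollified function into a genuine Schwartz function, and adding small spread-out corrections of rank one and of the form $\delta_{ij}$ repairs condition~(i) at a cost that vanishes as the dilation and mollification parameters tend to their limits. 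The Poisson route is the shorter and more natural one given the proof of Theorem~\ref{thm:space} already in hand, and it is the one I would write up, falling back on the approximation route only if the convergence fact $\sum_{u\in L^*}\hat g(u)<\infty$ proves awkward to establish cleanly.
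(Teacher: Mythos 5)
Your primary route has a genuine gap at exactly the point you flag. Nonnegativity and integrability of $\hat g$ do \emph{not} force $\sum_{u\in L^*}\hat g(u)<\infty$: a nonnegative continuous $L^1$ function on $\R^n$ can easily have divergent lattice sums (take narrow spikes of height $1$ and width $2^{-|k|}$ at each integer $k$). Nothing in the positive-type hypothesis for $g$ translates directly into the pointwise control of $\hat g$ near $L^*$ that you would need, and the paper does not attempt this direct route. In fact, the paper deliberately proves Poisson summation (its Lemma~\ref{lem:poisson}) only for continuous positive-type functions of \emph{bounded support}, where the argument is genuinely different from yours: the periodization $K_i(x,y)=\sum_{v\in L}f_{ii}(v+x-y)$ is then a \emph{finite} sum, hence a continuous positive kernel on the torus $\R^n/L$, and Mercer's theorem shows it is trace class with eigenvalues $\hat f_{ii}(u)$, $u\in L^*$. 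That is where the convergence $\sum_{u\in L^*}\hat f_{ii}(u)<\infty$ comes from; the Cauchy--Schwarz step $|\hat f_{ij}|\le(\hat f_{ii}\hat f_{jj})^{1/2}$, which you also use, then extends it to the off-diagonal entries. Without bounded support one cannot even assert that the periodization is continuous, so Mercer does not apply, and your claimed convergence has no visible justification.

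Your alternative route --- approximate $f$ by better-behaved functions and pass to the limit --- is the paper's actual strategy, but the paper's regularization is simpler than the one you propose. Instead of mollifying by convolution with an autocorrelation $\xi*\widetilde\xi$ (which spreads support and forces you to dilate the bodies $\Kcal_i$, and then also requires a Gaussian factor to become Schwartz), the paper \emph{multiplies} by the compactly supported positive-definite factor $\vol(B(0,T)\cap B(x,T))/\vol B(0,T)$. Multiplication by a nonnegative function trivially preserves condition~(iii) with no dilation, preserves positive type because $\bigl(g^T(x_i-x_j)\bigr)_{i,j}$ is a Hadamard product of positive semidefinite matrices, and yields a bounded-support continuous function to which Lemma~\ref{lem:poisson} applies directly (no need to reach a Schwartz function or invoke Theorem~\ref{thm:space}). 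Condition~(i) is then repaired exactly as you anticipate, by inflating the diagonal entries by a factor $\alpha(T)\ge1$ with $\alpha(T)\to1$, using dominated convergence for $\hat g^T_{ij}(0)\to\hat f_{ij}(0)$. So: keep the approximation idea, replace convolution with multiplicative truncation, and add the Mercer/trace-class argument as the replacement for the Poisson summation step you could not nail down.
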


Let~$V = \R^n \times \{ 1, \ldots, N \}$. Notice that the
kernel~$K\colon V \times V \to \R$ such that
\[
K((x, i), (y, j)) = f_{ij}(x - y),
\]
implicitly defined by the function~$f$, plays the same role as the
matrix~$K$ from the definition of the theta prime number
(cf.~Section~\ref{ssec:generalization}). For instance, this is a
positive kernel, since~$f$ is of positive type and hence for any~$L^1$
function~$\rho\colon V \to \R$ we have that
\[
\int_V \int_V K((x, i), (y, j)) \rho(x, i) \rho(y, j)\, d(x, i) d(y,
j) \geq 0.
\]
Theorem~\ref{thm:space-general} can then be seen as an analogue of the
weighted theta prime number for the packing graph with vertex set~$V$
that we consider.

When one reads through the proof of Theorem~\ref{thm:space}, the one
step that fails when~$f$ is~$L^1$ instead of Schwartz is the use of
the Poisson summation formula. Indeed, sum~\eqref{eq:space-sum} is not
anymore well-defined in such a situation. The summation formula also
holds, however, under somewhat different conditions that are just what
we need to make the proof go through. The proof of the following lemma
makes use of the well-known interpretation of the Poisson summation
formula as a trace formula, which for instance is explained by
Terras~\cite[Chapter 1.3]{Terras1985}.

\begin{lemma}
\label{lem:poisson}
Let~$f\colon \R^n \to \C^{N \times N}$ be a continuous function of
bounded support and positive type. Then for every lattice~$L \subseteq
\R^n$, every~$x \in \R^n$, and all~$i$, $j = 1$, \dots,~$N$ we have
\[
\sum_{v \in L} f_{ij}(x + v) = \frac{1}{\vol(\R^n / L)} \sum_{u \in
  L^*} \hat{f}_{ij}(u) e^{2\pi i u \cdot x}.
\]
\end{lemma}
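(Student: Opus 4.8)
The plan is to deduce Lemma~\ref{lem:poisson} from the scalar Poisson summation formula applied not to~$f_{ij}$ directly---which is merely continuous and compactly supported, not Schwartz, so the right-hand side need not converge absolutely---but to suitable mollifications, together with the positive-type hypothesis which forces the Fourier side to behave. More precisely, first I would reduce to the scalar case: since each~$g_p(x) = p^* f(x) p$ is continuous, compactly supported, and of positive type, and since $\widehat{g_p}(u) = p^* \hat f(u) p$, it suffices to prove the formula for a single continuous compactly supported scalar function~$g$ of positive type; the matrix entries~$f_{ij}$ are recovered by polarization from the $g_p$'s. So from now on assume~$g\colon\R^n\to\C$ is continuous, of bounded support, and of positive type.

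Next I would record the two facts that make the argument work. By Bochner's theorem, a continuous function of positive type is the Fourier transform of a finite positive measure; since here~$g$ is in~$L^1$ (being continuous with compact support), that measure is~$\hat g(u)\,du$ with~$\hat g\geq 0$ and $\hat g\in L^\infty$, and moreover $\int_{\R^n}\hat g(u)\,du = g(0) < \infty$. Thus $\hat g$ is nonnegative and integrable, so the inversion formula $g(x) = \int \hat g(u)e^{2\pi i u\cdot x}\,du$ holds pointwise and the sum $\sum_{u\in L^*}\hat g(u)e^{2\pi i u\cdot x}$, while not obviously absolutely convergent, has nonnegative terms after averaging. The clean way to finish is the trace-formula viewpoint (Terras~\cite[Chapter 1.3]{Terras1985}): consider the operator on $L^2(\R^n/L)$ given by convolution with the $L$-periodization $F(x) = \sum_{v\in L} g(x+v)$ (a finite sum since $g$ has compact support, hence a continuous function on the torus). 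On one hand its eigenvalues are exactly $\hat g(u)$ for $u\in L^*$, with eigenfunctions $e^{2\pi i u\cdot x}$; on the other hand, because $g$ is of positive type, this operator is positive semidefinite, so $\sum_{u\in L^*}\hat g(u)$ converges (it is the trace, computed as $\mathrm{vol}(\R^n/L)^{-1}\int_{\R^n/L}\sum_v g(x+v)\,dx$ — wait, more carefully: the trace of a positive trace-class integral operator with continuous kernel equals the integral of the kernel on the diagonal). Positive-semidefiniteness plus a continuous kernel gives, via Mercer's theorem on the compact torus, that the Fourier series of $F$ converges absolutely and uniformly to $F$; that is precisely $\sum_{v\in L} g(x+v) = \mathrm{vol}(\R^n/L)^{-1}\sum_{u\in L^*}\hat g(u)e^{2\pi i u\cdot x}$, with the right-hand side absolutely convergent.

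I would lay the steps out in this order: (1) reduce from the matrix-valued $f$ to scalar $g = g_p$ of positive type, continuous, compactly supported; (2) invoke Bochner to get $\hat g\geq 0$, $\hat g\in L^1$, with inversion valid pointwise; (3) form the periodization $F$ on $\R^n/L$, note it is continuous (finite sum of continuous functions, as $\mathrm{supp}\,g$ meets only finitely many cosets), and compute its Fourier coefficients to be $\mathrm{vol}(\R^n/L)^{-1}\hat g(u)$ by unfolding the integral over the torus; (4) observe the convolution operator $\phi\mapsto F\ast\phi$ on $L^2(\R^n/L)$ is positive semidefinite because $g$ is of positive type (its quadratic form is $\iint \overline{\phi(x)} F(x-y)\phi(y)$, which is a limit of the positive-type quadratic forms for $g$); (5) apply Mercer's theorem on the compact torus to conclude the Fourier series of $F$ converges absolutely and uniformly to $F$, which is the claimed identity; (6) unwind the reduction by polarization to recover the stated formula for each $f_{ij}$.

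The main obstacle is step~(5): justifying that the Fourier series of the periodization converges to the function \emph{pointwise everywhere}, not merely in $L^2$ or almost everywhere. For a general continuous function on the torus Fourier series need not converge pointwise, so one genuinely needs the positive-type hypothesis — it is exactly what upgrades $L^2$-convergence to absolute (hence uniform) convergence, via the positivity of the associated integral operator and Mercer's theorem. A secondary technical point is checking that the positive-type quadratic form for $g$ on $\R^n$ really does pass to the periodized quadratic form on the torus: this is a routine computation writing $\phi$ on the torus as a periodization of a compactly supported function on $\R^n$ and using translation-invariance, but it must be done carefully since $g$ is only $L^\infty$-of-positive-type in the integral sense and one wants the finite-matrix characterization; here the continuity of $g$ and the equivalence of the two characterizations of positive type recalled just before the lemma let us use $\bigl(g(x_i-x_j)\bigr)_{i,j}\succeq 0$ directly on Riemann sums.
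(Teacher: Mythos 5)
Your proof is correct and rests on the same central idea as the paper: view the $L$-periodization as a continuous convolution kernel on the compact torus $\R^n/L$ whose eigenfunctions are the characters $e^{2\pi i u\cdot x}$ with eigenvalues $\hat f_{ij}(u)$, observe that the kernel is positive, and invoke Mercer's theorem to upgrade $L^2$-convergence of the Fourier series to absolute, uniform, and hence pointwise convergence. The one structural difference is in how the matrix structure is handled: you reduce to scalar functions of positive type via $g_p = p^*fp$ and then recover each $f_{ij}$ by polarization, whereas the paper applies Mercer only to the diagonal kernels $K_i$ built from $f_{ii}$, concludes $\sum_{u\in L^*}\hat f_{ii}(u)<\infty$, and then extends to off-diagonal entries via the pointwise bound $|\hat f_{ij}(u)|^2\le \hat f_{ii}(u)\hat f_{jj}(u)$ coming from positive semidefiniteness of $2\times2$ submatrices, together with Cauchy--Schwarz over $L^*$. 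Both routes are valid; yours keeps the Mercer step cleanly scalar throughout, while the paper's Cauchy--Schwarz step avoids polarization and makes explicit that only the diagonal trace estimates are needed. One small streamlining available in your step~(4): rather than verifying positive semidefiniteness of the torus convolution operator through a limiting quadratic-form argument (which requires carefully passing the positive-type condition from $\R^n$ to the torus), you can simply observe that its eigenvalues are $\hat g(u)\ge 0$, which you already have from Bochner; this is exactly how the paper argues for the $K_i$.
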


\begin{proof}
Since each function~$f_{ij}$ is continuous and of bounded support, the
functions~$g_{ij}\colon \R^n / L \to \C$ such that
\[
g_{ij}(x) = \sum_{v \in L} f_{ij}(x + v)
\]
are continuous. Indeed, the sum above is well-defined, being in fact a
finite sum (since~$f_{ij}$ has bounded support), and
therefore~$g_{ij}$ can be seen locally as a sum of finitely many continuous
functions.

Let us now compute the Fourier transform of~$g_{ij}$. For~$u \in
L^*$ we have that
\[
\begin{split}
\hat{g}_{ij}(u)&= \int_{\R^n / L} g_{ij}(x) e^{-2\pi i u \cdot x}\, dx\\
&= \int_{\R^n / L} \sum_{v \in L} f_{ij}(x + v) e^{-2\pi i u \cdot x}\,
dx\\
&= \int_{\R^n} f_{ij}(x) e^{-2\pi i u \cdot x}\, dx\\
&= \hat{f}_{ij}(u).
\end{split}
\]
So we know that
\begin{equation}
\label{eq:gij-l2}
g_{ij}(x) = \frac{1}{\vol(\R^n / L)} \sum_{u \in L^*} \hat{f}_{ij}(u)
e^{2\pi i u \cdot x}
\end{equation}
in the sense of~$L^2$ convergence. Our goal is to prove that pointwise
convergence also holds above.

To this end we consider for~$i = 1$, \dots,~$N$ the kernel~$K_i\colon
(\R^n / L) \times (\R^n / L) \to \C$ such that
\[
K_i(x, y) = \sum_{v \in L} f_{ii}(v + x - y).
\]

Since each function~$f_{ii}$ is of bounded support and continuous,
each kernel~$K_i$ is continuous. Since for each~$i$ we have
that~$f_{ii}(x) = \overline{f_{ii}(-x)}$ for all~$x \in \R^n$
(since~$f$ is of positive type), each kernel~$K_i$ is
self-adjoint. Notice that the functions~$x \mapsto (\vol(\R^n /
L))^{-1/2} e^{2\pi i u \cdot x}$, for~$u \in L^*$, form a complete
orthonormal system of~$L^2(\R^n / L)$. Each such function is also an
eigenfunction of~$K_i$, with eigenvalue~$\hat{f}_{ii}(u)$. Indeed, we
have
\[
\begin{split}
&\int_{\R^n / L} K_i(x, y) (\vol(\R^n / L))^{-1/2} e^{2\pi i u \cdot
  y}\, dy\\
&\qquad{}= (\vol(\R^n / L))^{-1/2} \int_{\R^n / L} \sum_{v \in L} f_{ii}(v + x -
y) e^{2\pi i u \cdot y}\, dy\\
&\qquad{}= (\vol(\R^n / L))^{-1/2} \int_{\R^n} f_{ii}(x - y) e^{2\pi i u \cdot
  y}\, dy\\
&\qquad{}= (\vol(\R^n / L))^{-1/2} \int_{\R^n} f_{ii}(y) e^{2\pi i u \cdot (x
  - y)}\, dy\\
&\qquad{}= \hat{f}_{ii}(u) (\vol(\R^n / L))^{-1/2} e^{2\pi i u \cdot x}.
\end{split}
\]

Since~$f$ is of positive type, the matrices of Fourier
transforms~$\bigl(\hat{f}_{ij}(u)\bigr)_{i,j=1}^N$, for~$u \in \R^n$,
are all positive semidefinite. In particular this implies that the
Fourier transforms of~$f_{ii}$, for~$i = 1$, \dots,~$N$, are
nonnegative. So we see that each~$K_i$ is a continuous and positive
kernel. Mercer's theorem (see for instance Courant and Hilbert
\cite{CourantHilbert1953}) then implies that~$K_i$ is trace-class, its
trace being the sum of all its eigenvalues. So for each~$i = 1$,
\dots,~$N$, the series
\begin{equation}
\label{eq:kernel-series}
\sum_{u \in L^*} \hat{f}_{ii}(u)
\end{equation}
converges, and since each summand is nonnegative, it converges
absolutely.

Suppose now that~$i$, $j = 1$, \dots,~$N$ are so that~$i \neq
j$. Since the matrices of Fourier transforms are nonnegative, for
all~$u \in \R^n$ we have that the matrix
\[
\begin{pmatrix}
\hat{f}_{ii}(u)&\hat{f}_{ij}(u)\\[4pt]
\overline{\hat{f}_{ij}(u)}&\hat{f}_{jj}(u)
\end{pmatrix}
\]
is positive semidefinite, and this in turn implies
that~$|\hat{f}_{ij}(u)|^2 \leq \hat{f}_{ii}(u) \hat{f}_{jj}(u)$ for
all~$u \in \R^n$. Using then the convergence of the
series~\eqref{eq:kernel-series} and the Cauchy-Schwarz inequality, one
gets
\[
\sum_{u \in L^*} |\hat{f}_{ij}(u)| \leq \sum_{u \in L^*}
(\hat{f}_{ii}(u) \hat{f}_{jj}(u))^{1/2} \leq \biggl(\sum_{u \in L^*}
\hat{f}_{ii}(u)\biggr)^{1/2} \biggl(\sum_{u \in L^*}
\hat{f}_{jj}(u)\biggr)^{1/2},
\]
and we see that in fact for all~$i$, $j = 1$, \dots,~$N$ the series
\[
\sum_{u \in L^*} \hat{f}_{ij}(u)
\]
converges absolutely.

This convergence result shows that the sum in~\eqref{eq:gij-l2}
converges absolutely and uniformly for all~$x \in \R^n / L$. This
means that the function defined by this sum is a continuous function,
and since~$g_{ij}$ is also a continuous function, and
in~\eqref{eq:gij-l2} we have convergence in the~$L^2$ sense, we must
also then have pointwise convergence, as we aimed to establish.
\end{proof}
 
With this we may give a proof of Theorem~\ref{thm:space-general}:

\begin{proof}[Proof of Theorem~\ref{thm:space-general}]
Using Lemma~\ref{lem:poisson}, we may repeat the proof of
Theorem~\ref{thm:space} given before, proving the theorem for
continuous functions of bounded support. To extend the proof also to
continuous~$L^1$ functions we use the following trick.

Let~$f\colon \R^n \to \R^{N \times N}$ be a continuous and~$L^1$
function satisfying the hypothesis of the theorem. For each~$T > 0$
consider the function~$g^T\colon \R^n \to \R^{N \times N}$ defined
such that
\[
g^T(x) = \frac{\vol(B(0, T) \cap B(x, T))}{\vol B(0, T)} f(x),
\]
where~$B(p, T)$ is the ball of radius~$T$ centered at~$p$.

It is easy to see that~$g^T$ is a continuous function of bounded
support. It is also clear that it satisfies condition~(iii) from
the statement of the theorem. We now show that~$g^T$ is a function of
positive type, that is, it satisfies condition~(ii).

For this pick any points~$x_1$, \dots,~$x_m \in
\R^n$. Let~$\chi_i\colon \R^n \to \{0,1\}$ be the characteristic
function of~$B(x_i, T)$ and denote by~$\langle \phi, \psi \rangle$ the
standard inner product between functions~$\phi$ and~$\psi$ in the
Hilbert space~$L^2(\R^n)$. Then
\[
\begin{split}
g^T(x_i - x_j)&= \frac{\vol(B(0, T) \cap B(x_i - x_j, T))}{\vol B(0,
  T)} f(x_i - x_j)\\
&= \frac{\vol(B(x_i, T) \cap B(x_j, T))}{\vol B(0, T)} f(x_i - x_j)\\
&= \frac{\langle \chi_i, \chi_j \rangle}{\vol B(0, T)} f(x_i - x_j).
\end{split}
\]

This shows that the matrix~$\bigl(g^T(x_i - x_j)\bigr)_{i,j=1}^m$ is
positive semidefinite, being the Hadamard product, i.e.\ entrywise
product, of two positive semidefinite matrices. We therefore have
that~$g^T$ is of positive type.

Now,~$g^T$ is a continuous function of positive type and bounded
support, satisfying condition~(iii). It is very possible, however,
that~$g^T$ does not satisfy condition~(i), and so the conclusion of
the theorem may not apply to~$g^T$. Let us now fix this
problem. 

Notice that~$g^T_{ij}$ converges pointwise to~$f_{ij}$ as~$T \to
\infty$. Moreover, for all~$T > 0$ we have~$|g^T_{ij}(x)| \leq
|f_{ij}(x)|$. It then follows from Lebesgue's dominated convergence
theorem that~$\hat{g}^T_{ij}(0) \to \hat{f}_{ij}(0)$ as~$T \to
\infty$. This means that there exists a number~$T_0 > 0$ such that 
for each~$T \geq T_0$ we may pick a number~$\alpha(T) \geq 1$ so
that the function~$h^T\colon \R^n \to \C^{N \times N}$ such that
\[
\begin{array}{ll}
h^T_{ii}(x) = \alpha(T) g^T_{ii}(x)&\text{for~$i = 1$, \dots,~$N$},\\[2pt]
h^T_{ij}(x) = g^T_{ij}(x)&\text{for~$i$, $j = 1$, \dots,~$N$ with~$i
  \neq j$}
\end{array}
\]
for all~$x \in \R^n$ satisfies condition~(ii). We may moreover pick
the numbers~$\alpha(T)$ in such a way that~$\lim_{T \to \infty}
\alpha(T) = 1$.

It is also easy to see that each function~$h^T$ is of positive type
and bounded support and satisfies condition~(iii). Hence the
conclusion of the theorem applies for each~$h^T$, and so for every~$T
\geq T_0$ we see that
\[
M_T = \max\{\, h^T_{ii}(0) : \text{$i = 1$, \dots,~$N$}\,\}
\]
is an upper bound for the density of any packing of translated copies
of~$\Kcal_1$, \dots,~$\Kcal_N$. But then, since~$g^T_{ii}(0) =
f_{ii}(0)$ for all~$T \geq 0$, and since~$\lim_{T \to \infty}
\alpha(T) = 1$, we see that
\[
\max\{\, f_{ii}(0) : \text{$i = 1$, \dots,~$N$}\, \}
= \lim_{T \to \infty} M_T,
\]
finishing the proof.
\end{proof}

\section{Computations for binary spherical cap packings}
\label{sec:unit sphere computations}

In this and the next section we describe how we obtained the
numerical results of Sections~\ref{ssec:computations sphere}
and~\ref{ssec:computations space}. Our approach is computational: to
apply Theorems~\ref{thm:sphere} and~\ref{thm:space} we use techniques
from semidefinite programming and polynomial optimization.

We start by briefly discussing the case of binary spherical cap
packings. Next we will discuss the more computationally challenging
case of binary sphere packings.

It is a classical result of Luk\'acs (see e.g.\ Theorem~1.21.1 in
Szeg\"o~\cite{Szego1938}) that a real univariate polynomial~$p$ of
degree~$2d$ is nonnegative on the interval~$[a, b]$ if and only if
there are real polynomials~$q$ and~$r$ such that~$p(x) = (q(x))^2 +
(x-a) (b-x) (r(x))^2$. This characterization is useful when we combine
it with the elementary but powerful observation (discovered
independently by several authors, cf.~Laurent~\cite{Laurent2009}) that
a real univariate polynomial~$p$ of degree~$2d$ is a sum of
squares of polynomials if and only if~$p(x) = v(x)^{\sf T} Q v(x)$ for
some positive semidefinite matrix~$Q$, where~$v(x) = (1, x, \ldots,
x^d)$ is a vector whose components are the monomial basis.

Let~$\alpha_1$, \dots,~$\alpha_N \in (0, \pi]$ be angles and~$d$ be an
integer. Write~$v_0(x) = (1, x, \ldots, x^d)$ and~$v_1(x) = (1, x,
\ldots, x^{d - 1})$.  Using this characterization together with
Theorem~\ref{thm:sphere}, we see that the optimal value of the
following optimization problem gives an upper bound for the density of
a packing of spherical caps with angles~$\alpha_1$,~\dots,~$\alpha_N$.
\medskip

\noindent
{\bf Problem~\spheresdp/.}\enspace 
For~$k = 0$, \dots,~$2d$, find positive semidefinite
matrices~$\bigl(f_{ij,k}\bigr)_{i,j=1}^N$, and for~$i$, $j = 1$,
\dots,~$N$, find~$(d + 1) \times (d+1)$ positive semidefinite
matrices~$Q_{ij}$ and~$d \times d$ positive semidefinite
matrices~$R_{ij}$ that minimize
\[
\max\biggl\{\, \sum_{k=0}^{2d} f_{ii, k} : \text{$i = 1$,
  \dots,~$N$}\,\biggr\}
\]
and are such that
\[
\bigl(f_{ij,0} - w(\alpha_i)^{1/2} w(\alpha_j)^{1/2}\bigr)_{i,j=1}^N
\]
is positive semidefinite and the polynomial identities
\begin{equation}
\label{eq:sphere-poly}
\vcenter{\displaywidth=\hsize\advance\displaywidth by-1cm%
\displaylines{\noalign{\vskip5pt}
\qquad\sum_{k=0}^{2d} f_{ij,k} P_k^n(u) + \langle Q_{ij}, v_0(u) v_0(u)^\tsp
\rangle\hfill\cr
\hfill{} + \langle R_{ij}, (u + 1)(\cos(\alpha_i + \alpha_j) - u)
v_1(u) v_1(u)^\tsp \rangle = 0\qquad\cr
}}
\end{equation}
are satisfied for~$i$, $j = 1$, \dots,~$N$.\hfill$\vartriangleleft$
\medskip

Above,~$\langle A, B\rangle$ denotes the trace inner product between
matrices~$A$ and~$B$. Problem~\spheresdp/ is a semidefinite programming
problem, as the polynomial identities~\eqref{eq:sphere-poly} can 
each be expressed as~$2d + 1$ linear constraints on the entries of the
matrices involved. Indeed, to check that a polynomial is identically
zero, it suffices to check that the coefficient of each monomial~$1$,
$x$, \dots,~$x^{2d}$ is zero, and for each such monomial we get a
linear constraint.

In the above, we work with the standard monomial basis~$1$, $x$,
\dots,~$x^{2d}$, but we could use any other basis of the space of
polynomials of degree at most~$2d$, both to define the vectors~$v_0$
and~$v_1$ and to check the polynomial
identity~\eqref{eq:sphere-poly}. Such a change of basis does not
change the problem from a formal point of view, but can drastically
improve the performance of the solvers used. In our computations for
binary spherical cap packings it was enough to use the standard
monomial basis. We will see in the next section, when we present our
computations for the Euclidean space, that a different choice of basis
is essential.

We reported in Section~\ref{ssec:computations sphere} on our
calculations for~$N = 1$, and~$2$ and~$n = 3$, $4$, and~$5$. The bounds, for the
angles under consideration, do not seem to improve beyond~$d = 25$, so
we use this value for~$d$ in all computations. To obtain these
bounds we used the solver SDPA-QD, which works with quadruple precision
floating point numbers, from the SDPA family~\cite{SDPA}.

\section{Computations for binary sphere packings}
\label{sec:Euclidean space computations}

In this section we discuss our computational approach to find upper
bounds for the density of binary sphere packings using
Theorem~\ref{thm:space}. This is a more difficult application of
semidefinite programming and polynomial optimization techniques than
the one described in Section~\ref{sec:unit sphere computations}.

It is often the case in applications of sum of squares
techniques that, if one formulates the problems carelessly, high
numerical instability invalidates the final results, or even numerical
results cannot easily be obtained.  This raises questions of how to
improve the formulations used and the precision of the computations,
so that we may provide \emph{rigorous} bounds. We also address these
questions and, since the techniques we use and develop might be
of interest to the reader who wants to perform computations in
polynomial optimization,  we include some details.

\subsection{Theorem~\ref{thm:space} for multiple-size sphere packings}

In the case of multiple-size sphere packings, Theorem~\ref{thm:space}
can be simplified. The key observation here is that, when all the
bodies~$\Kcal_i$ are spheres, then condition~(iii) depends only on the
norm of the vector~$x$. More specifically, if each~$\Kcal_i$ is a
sphere of radius~$r_i$, then~$\Kcal_i^\circ \cap (x + \Kcal_j^\circ) =
\emptyset$ if and only if~$\|x\| \geq r_i + r_j$.

So in Theorem~\ref{thm:space} one can choose to restrict oneself to
radial functions. A function~$f\colon \R^n \to \C$ is \defi{radial} if
the value of~$f(x)$ depends only on the norm of~$x$. If~$f\colon
\R^n \to \C$ is radial, for~$t \geq 0$ we denote by~$f(t)$ the common
value of~$f$ for vectors of norm~$t$.

The Fourier transform~$\hat{f}(u)$ of a radial function~$f$ also
depends only on the norm of~$u$; in other words, the Fourier transform
of a radial function is also radial. By restricting ourselves to
radial functions, we obtain the following version of
Theorem~\ref{thm:space}.

\begin{theorem}
\label{thm:space-binary}
Let~$r_1$, \dots,~$r_N > 0$ and let~$f\colon \R^n \to \R^{N \times N}$
be a matrix-valued function whose every component~$f_{ij}$ is a radial
Schwartz function. Suppose~$f$ satisfies the following conditions:
\begin{itemize}
\item[(i)] the matrix~$\bigl(\hat{f}_{ij}(0) - (\vol B(r_i))^{1/2}
  (\vol B(r_j))^{1/2}\bigr)_{i,j=1}^N$ is positive semidefinite,
  where~$B(r)$ is the ball of radius~$r$ centered at the origin;

\item[(ii)] the matrix of Fourier
  transforms~$\bigl(\hat{f}_{ij}(t)\bigr)_{i,j=1}^N$  is positive
  semidefinite for every~$t > 0$;

\item[(iii)] $f_{ij}(w) \leq 0$ if~$w \geq r_i + r_j$, for~$i$, $j =
  1$, \dots,~$N$.
\end{itemize}
Then the density of any packing of spheres of radii~$r_1$,
\dots,~$r_N$ in the Euclidean space~$\R^n$ is at
most~$\max\{\, f_{ii}(0) : \text{$i = 1$, \dots,~$N$}\,\}$.
\end{theorem}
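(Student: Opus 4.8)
The plan is to obtain Theorem~\ref{thm:space-binary} as a direct specialization of Theorem~\ref{thm:space}, taking each convex body to be a ball centered at the origin, $\Kcal_i = B(r_i)$. Thus the only thing to do is to check that a radial matrix-valued Schwartz function $f$ satisfying conditions~(i)--(iii) of Theorem~\ref{thm:space-binary} also satisfies conditions~(i)--(iii) of Theorem~\ref{thm:space}; once that is done, Theorem~\ref{thm:space} immediately gives the density bound $\max\{\, f_{ii}(0) : i = 1, \dots, N\,\}$. Two elementary observations do all the work: a description of when two open balls overlap, and the classical fact that the Fourier transform of a radial function is radial.

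First I would dispose of condition~(iii). If $x \in \R^n$, then $B(r_i)^\circ$ and $x + B(r_j)^\circ$ are open balls whose centers lie at distance $\|x\|$ from each other; by the triangle inequality they meet if and only if $\|x\| < r_i + r_j$. Hence $\Kcal_i^\circ \cap (x + \Kcal_j^\circ) = \emptyset$ precisely when $\|x\| \geq r_i + r_j$, so condition~(iii) of Theorem~\ref{thm:space-binary} --- that $f_{ij}(w) \leq 0$ for $w \geq r_i + r_j$, understood via the radial convention --- is exactly condition~(iii) of Theorem~\ref{thm:space}. Condition~(i) here is word-for-word condition~(i) of Theorem~\ref{thm:space}, since $\vol \Kcal_i = \vol B(r_i)$. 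For condition~(ii) I would use that the Fourier transform commutes with orthogonal changes of variable, so that $\hat{f}_{ij}$ is radial whenever $f_{ij}$ is; writing $\hat{f}_{ij}(u) = \hat{f}_{ij}(\|u\|)$, the hypothesis that $\bigl(\hat{f}_{ij}(t)\bigr)_{i,j=1}^N$ is positive semidefinite for every $t > 0$ is then the same as saying $\bigl(\hat{f}_{ij}(u)\bigr)_{i,j=1}^N$ is positive semidefinite for every $u \in \R^n \setminus \{0\}$, which is condition~(ii) of Theorem~\ref{thm:space}. With all three conditions verified, Theorem~\ref{thm:space} finishes the proof.

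I do not expect a genuine analytic obstacle here: the argument is essentially bookkeeping, translating the one-variable radial conditions into the conditions of Theorem~\ref{thm:space}. The one point worth stating carefully is that the scalar condition ``$\bigl(\hat{f}_{ij}(t)\bigr)_{i,j=1}^N$ positive semidefinite for all $t > 0$'' really does cover every nonzero frequency $u \in \R^n$, and this rests on the radiality of each $\hat{f}_{ij}$. One could moreover observe, by averaging an arbitrary feasible solution of Theorem~\ref{thm:space} over the action of $\ort(n)$ (as in Section~\ref{ssec:generalization}), that restricting attention to radial functions entails no loss of generality for sphere packings --- the best bounds obtainable from Theorems~\ref{thm:space} and~\ref{thm:space-binary} coincide --- but this refinement is not needed for the statement above.
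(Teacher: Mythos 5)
Your proposal is correct and follows exactly the paper's approach: specialize Theorem~\ref{thm:space} to $\Kcal_i = B(r_i)$, note that two open balls of radii $r_i, r_j$ at centers distance $\|x\|$ apart are disjoint iff $\|x\| \geq r_i + r_j$, and use that the Fourier transform of a radial function is radial to translate condition~(ii). Your closing remark about radial averaging giving no loss of generality is also exactly the observation the paper makes immediately after the theorem statement.
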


One might ask whether the restriction to radial functions worsens the
bound of Theorem~\ref{thm:space}. For spheres, this is not the case.
Indeed, suppose each body~$\Kcal_i$ is a sphere. If~$f\colon \R^n \to
\R^{N \times N}$ is a function satisfying the conditions of the
theorem, then its radialized version, the function
\[
\overline{f}(x) = \int_{S^{n-1}} f(\|x\| \xi)\, d\omega_n(\xi),
\]
also satisfies the conditions of the theorem, and it provides the same
upper bound. This shows in particular that, for the case of
multiple-size sphere packings, Theorem~\ref{thm:space-binary} is
equivalent to Theorem~\ref{thm:space}.

\subsection{A semidefinite programming formulation}

To simplify notation and because it is the case of our main interest
we now take~$N = 2$. Everything in the following also goes through for
arbitrary~$N$ with obvious modifications.

To find a function~$f$ satisfying the conditions of
Theorem~\ref{thm:space-binary} we specify~$f$ via its Fourier
transform. Let~$d \geq 0$ be an odd integer and consider the even
function~$\varphi\colon \R_{\geq 0} \to \R^{2 \times 2}$ such that
\[
\varphi_{ij}(t) = \sum_{k = 0}^d a_{ij, k} t^{2k},
\]
where each~$a_{ij, k}$ is a real number and~$a_{ij, k} = a_{ji, k}$
for all~$k$. We set the Fourier transform of~$f$ to be
\[
\hat{f}_{ij}(u) = \varphi_{ij}(\|u\|) e^{-\pi \|u\|^2}.
\]
Notice that each~$\hat{f}_{ij}$ is a Schwartz function, so its Fourier
inverse is also Schwartz.

The reason why we choose this form for the Fourier transform of~$f$
is that it makes it simple to compute~$f$ from its Fourier
transform by using the following result.

\begin{lemma}
\label{lem:inverse}
We have that
\begin{equation}
\label{eq:Laquerre}
\int_{\R^n} \|u\|^{2k} e^{-\pi\|u\|^2} e^{2\pi i u \cdot x}\, du =
k!\, \pi^{-k} e^{-\pi \|x\|^2} L_k^{n/2-1}(\pi \|x\|^2),
\end{equation}
where~$L_k^{n/2-1}$ is the Laguerre polynomial of degree~$k$ with
parameter~$n/2 - 1$.
\end{lemma}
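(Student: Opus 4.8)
The plan is to compute the inverse Fourier transform of the radial function $u \mapsto \|u\|^{2k} e^{-\pi\|u\|^2}$ directly, reducing it to a one-dimensional integral via the standard formula for Fourier transforms of radial functions in terms of Bessel functions, and then to recognize the resulting integral through a known Hankel-transform identity relating Gaussians times powers to Laguerre polynomials. Concretely, for a radial function $g(u) = g_0(\|u\|)$ on $\R^n$, its Fourier transform at $x$ with $\|x\| = t$ is
\[
\hat g(x) = 2\pi t^{-(n/2-1)} \int_0^\infty g_0(s)\, J_{n/2-1}(2\pi s t)\, s^{n/2}\, ds,
\]
so with $g_0(s) = s^{2k} e^{-\pi s^2}$ the claim reduces to evaluating $\int_0^\infty s^{2k + n/2} e^{-\pi s^2} J_{n/2-1}(2\pi s t)\, ds$.

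First I would normalize: substitute $s = \sqrt{\sigma}$ (or rescale to absorb the $\pi$'s) to bring the integral into the canonical Weber–Schafheitlin / Hankel form $\int_0^\infty \sigma^{\mu} e^{-p\sigma^2} J_\nu(a\sigma)\, d\sigma$, whose closed form is classical (see e.g.\ Gradshteyn–Ryzhik 6.631 or Watson's Bessel treatise): it equals a constant times $p^{-(\mu+\nu+1)/2} e^{-a^2/(8p)}$ times a confluent hypergeometric function ${}_1F_1$, which collapses to a Laguerre polynomial $L_k^{\nu}$ precisely when $\mu - \nu$ is a nonnegative even integer — here $\mu - \nu = (2k + n/2) - (n/2 - 1) = 2k+1$... so I would instead track the exact index bookkeeping carefully, using the identity
\[
\int_0^\infty e^{-p\sigma^2} \sigma^{\nu + 2k + 1} J_\nu(a\sigma)\, d\sigma
= \frac{k!\, a^\nu}{2^{\nu+1} p^{\nu + k + 1}} e^{-a^2/(4p)} L_k^\nu\!\Bigl(\frac{a^2}{4p}\Bigr),
\]
and then substitute $\nu = n/2 - 1$, $p = \pi$, $a = 2\pi t$, and reassemble all prefactors. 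The target right-hand side $k!\,\pi^{-k} e^{-\pi t^2} L_k^{n/2-1}(\pi t^2)$ should emerge after the $t^{-(n/2-1)}$ prefactor from the radial Fourier formula cancels the $a^\nu = (2\pi t)^{n/2-1}$ factor and the powers of $\pi$ combine.

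An alternative, perhaps cleaner, route avoids Bessel functions entirely: observe that $\|u\|^{2k} e^{-\pi\|u\|^2} = (-1/\pi)^k \bigl(\partial_c^k\, e^{-c\pi\|u\|^2}\bigr)\big|_{c=1}$ up to adjusting, or more directly write $\|u\|^{2k} e^{-\pi\|u\|^2}$ as a derivative with respect to a parameter $\lambda$ of the Gaussian $e^{-\lambda\|u\|^2}$ evaluated at $\lambda = \pi$. Since the Fourier transform of $e^{-\lambda\|u\|^2}$ is $(\pi/\lambda)^{n/2} e^{-\pi^2\|x\|^2/\lambda}$, differentiating $k$ times in $\lambda$ and setting $\lambda = \pi$ produces $e^{-\pi\|x\|^2}$ times a polynomial in $\|x\|^2$ of degree $k$; one then identifies this polynomial with $k!\,\pi^{-k} L_k^{n/2-1}(\pi\|x\|^2)$ by matching it against the Rodrigues-type / generating-function characterization of Laguerre polynomials, namely $L_k^{\alpha}(y) = \frac{y^{-\alpha} e^y}{k!}\frac{d^k}{dy^k}(e^{-y} y^{k+\alpha})$ with $\alpha = n/2-1$. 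The main obstacle in either approach is purely bookkeeping: getting every power of $\pi$, every factor of $2$, and the index of the Laguerre polynomial exactly right, since an off-by-one in the parameter $\alpha$ or a stray $\pi^{\pm k}$ would be easy to introduce; I would pin down the constants by checking the base case $k = 0$ (which must give $e^{-\pi\|x\|^2}$, consistent with $L_0^\alpha \equiv 1$) and then verifying the $k = 1$ case against a direct computation $\int_{\R^n}\|u\|^2 e^{-\pi\|u\|^2}e^{2\pi i u\cdot x}\,du$, which fixes the normalization for all $k$.
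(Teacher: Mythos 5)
Your primary route is essentially the paper's own proof: reduce to a one-dimensional integral via the radial Fourier transform formula (the paper cites Theorem~9.10.3 of Andrews--Askey--Roy for exactly this step), then evaluate the resulting Hankel-type integral $\int_0^\infty s^{2k+n/2} e^{-\pi s^2} J_{n/2-1}(2\pi s t)\, ds$ in closed form. The only cosmetic difference is that you quote the composite Hankel--Laguerre identity directly, whereas the paper passes through ${}_1F_1$ as an explicit intermediary (Corollary~4.11.8 of AAR for the Bessel integral, then equation~(6.2.2) for the reduction ${}_1F_1(-k;n/2;\cdot) \propto L_k^{n/2-1}(\cdot)$, followed by a $\Gamma(x{+}1)=x\Gamma(x)$ cleanup); those two steps are exactly what is packed into the single identity you invoke. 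One small slip: the ${}_1F_1$ truncates to a polynomial precisely when its numerator parameter is a nonpositive integer, which here corresponds to $\mu-\nu$ being a positive \emph{odd} integer ($\mu-\nu=2k+1$), not an even one, but you catch yourself and write down the correct identity anyway. Your alternative route --- differentiating the Gaussian $e^{-\lambda\|u\|^2}$ $k$ times in $\lambda$ and matching against the Rodrigues formula for $L_k^{n/2-1}$ --- is a genuinely different and more elementary argument that avoids Bessel functions entirely; it trades the AAR citations for a bit more hands-on bookkeeping with derivatives of $(\pi/\lambda)^{n/2}e^{-\pi^2\|x\|^2/\lambda}$, and would make the lemma self-contained. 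Either way, you stop short of actually assembling the constants, deferring to a base-case check at $k=0,1$; that verification is straightforward and does work out, so the plan is sound.
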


For background on Laguerre polynomials, we refer the reader to the
book by Andrews, Askey, and Roy~\cite{AndrewsAskeyRoy1999}.

\begin{proof}
  With $f(u) = \|u\|^{2k} e^{- \pi \|u\|^2}$, the left hand side of
  \eqref{eq:Laquerre} is equal to $\hat{f}(-x)$.  By \cite[Theorem~9.10.3]{AndrewsAskeyRoy1999} we have
\[
\hat{f}(-x) = 2 \pi \|x\|^{1 - n/2} \int_0^\infty s^{2k} e^{- \pi s^2} J_{n / 2 - 1}(2 \pi s \|x\|) s^{n/2} \, ds,
\]
where $J_{n / 2 - 1}$ is the Bessel function of the first kind with
parameter $n / 2 - 1$.  Using \cite[Corollary~4.11.8]{AndrewsAskeyRoy1999} we see that this is equal to
\begin{equation}
\label{eq:cor4118}
\pi^{-k} \frac{\Gamma(k + n/2)}{\Gamma(n / 2)} e^{-\pi \|x\|^2} {}_1F_1\Bigl({-k\atop n/2}; \pi \|x\|^2\Bigr),
\end{equation}
where ${}_1F_1$ is a hypergeometric series.

By \cite[(6.2.2)]{AndrewsAskeyRoy1999} we have
\[
{}_1F_1\Bigl({-k\atop n/2}; \pi \|x\|^2\Bigr) = \frac{k!}{(n/2)_k}
L_k^{n/2-1}(\pi \|x\|^2),
\]
where~$(n/2)_k = (n/2) (1 + n/2) \cdots (k - 1 + n / 2)$.

By substituting this in \eqref{eq:cor4118}, and using the property
that $\Gamma(x+1) = x \Gamma(x)$ for all $x \neq 0, -1, -2, \ldots$,
we obtain the right hand side of \eqref{eq:Laquerre} as desired.
\end{proof}

So we have
\[
f_{ij}(x) = \int_{\R^n} \varphi_{ij}(\|u\|) e^{-\pi\|u\|^2} e^{2\pi i u
  \cdot x}\, du
= \sum_{k=0}^d a_{ij, k} \, k!\, \pi^{-k} e^{-\pi \|x\|^2} L_k^{n/2-1}(\pi \|x\|^2).
\]
Notice that it becomes clear that~$f_{ij}$ is indeed real-valued, as
required by the theorem.

Consider the polynomial
\[
p(t) = \sum_{k=0}^d a_k t^{2k}.
\]
According to Lemma~\ref{lem:inverse}, if~$g(x)$ is the Fourier inverse
of~$\hat{g}(u) = p(\|u\|) e^{-\pi \|u\|^2}$, then~$g(\|x\|) = q(\|x\|)
e^{-\pi \|x\|^2}$, where
\[
q(w) = \sum_{k=0}^d a_k \, k!\, \pi^{-k} L_k^{n/2-1}(\pi w^2)
\]
is a univariate polynomial. We denote the polynomial~$q$ above
by~$\finv[p]$. Notice that~$\finv[p]$ is obtained from~$p$ via a
linear transformation, i.e., its coefficients are linear combinations
of the coefficients of~$p$. With this notation we have 
\[
f_{ij}(x) = \finv[\varphi_{ij}](\|x\|) e^{-\pi \|x\|^2}.
\]

Let
\begin{equation}
\label{eq:sigma}
\sigma(t, y_1, y_2) = \sum_{i,j=1}^2 \sum_{k=0}^d a_{ij, k} t^{2k} y_i
y_j.
\end{equation}
If this polynomial is a sum of squares, then it is nonnegative
everywhere, and hence the
matrices~$\bigl(\varphi_{ij}(t)\bigr)_{i,j=1}^2$ are positive
semidefinite for all~$t \geq 0$. This implies that~$f$ satisfies
condition~(ii) of Theorem~\ref{thm:space-binary}. (The converse is
also true, that if the
matrices~$\bigl(\varphi_{ij}(t)\bigr)_{i,j=1}^2$ are positive
semidefinite for all~$t \geq 0$, then~$\sigma$ is a sum of squares;
For a proof see Choi, Lam, Reznick \cite{ChoiLamReznick1980}. This fact is
related to the Kalman-Yakubovich-Popov lemma in systems and control; see the discussion in
Aylward, Itani, and Parrilo~\cite{AylwardItaniParrilo2007}.)

Moreover, we may recover~$\varphi$, and hence~$\hat{f}$,
from~$\sigma$. Indeed we have
\begin{equation}
\label{eq:phi-from-sigma}
\begin{split}
\varphi_{11}(t) &= \sigma(t, 1, 0),\\
\varphi_{22}(t) &= \sigma(t, 0, 1),\quad\text{and}\\
\varphi_{12}(t) &= (1/2) (\sigma(t, 1, 1) - \sigma(t, 1, 0) -
\sigma(t, 0, 1)).
\end{split}
\end{equation}
So we can express condition~(i) of Theorem~\ref{thm:space-binary} in
terms of~$\sigma$. We may also express condition~(iii) in
terms of~$\sigma$, since it can be translated as
\begin{equation}
\label{eq:non-pos-poly}
\finv[\varphi_{ij}](w) \leq 0\quad\text{for all~$w \geq r_i + r_j$
  and~$i$, $j = 1$, $2$ with~$i \leq j$}.
\end{equation}

If we find a polynomial~$\sigma$ of the form~\eqref{eq:sigma} that
is a sum of squares, is such that
\begin{equation}
\label{eq:sigma-ii}
\bigl(\varphi_{ij}(0) - (\vol B(r_i))^{1/2} (\vol
B(r_j))^{1/2}\bigr)_{i,j=1}^2
\end{equation}
is positive semidefinite, and satisfies~\eqref{eq:non-pos-poly}, then
the density of a packing of spheres of radii~$r_1$ and~$r_2$ is upper
bounded by
\[
\max\{ \finv[\varphi_{11}](0), \finv[\varphi_{22}](0) \}.
\]

We may encode conditions~\eqref{eq:non-pos-poly} in terms of sums of
squares polynomials (cf.~Section~\ref{sec:unit sphere computations}),
and therefore we may encode the problem of finding a~$\sigma$ as above
as a semidefinite programming problem, as we show now.

Let~$P_0$, $P_1$, \dots\ be a sequence of univariate polynomials where
polynomial~$P_k$ has degree~$k$. Consider the vector of
polynomials~$v$, which has entries indexed by~$\{ 0, \ldots,
\floor{d/2}\}$ given by
\[
v(t)_k = P_k(t^2)
\]
for~$k = 0$, \dots,~$\floor{d/2}$. We also write~$V(t) = v(t)
v(t)^\tsp$.

Consider also the vector of polynomials~$m$ with entries indexed
by~$\{ 1, 2 \} \times \{ 0, \ldots, \floor{d/2}\}$ given by
\[
m(t, y_1, y_2)_{i, k} = P_k(t^2) y_i
\]
for~$i$, $j = 1$, $2$ and~$k = 0$, \dots,~$\floor{d/2}$.

Since~$\sigma$ is an even polynomial, it is a sum of squares if and
only if there are positive semidefinite matrices~$S_0$, $S_1 \in
\R^{(d+1) \times (d+1)}$ such that
\[
\sigma(t, y_1, y_2) = \langle S_0, m(t, y_1, y_2) m(t, y_1, y_2)^\tsp
\rangle + \langle S_1, t^2 m(t, y_1, y_2) m(t, y_1, y_2)^\tsp \rangle.
\]

From the matrices~$S_0$ and~$S_1$ we may then recover~$\varphi_{ij}$
and also~$\finv[\varphi_{ij}]$. A more convenient way for
expressing~$\varphi_{ij}$ in terms of~$S_0$ and~$S_1$ is as
follows. Consider the matrices
\[
Y_{11} = \begin{pmatrix}
1&0\\
0&0
\end{pmatrix},
\quad
Y_{22} = \begin{pmatrix}
0&0\\
0&1
\end{pmatrix},
\quad\text{and}\quad
Y_{12} = \begin{pmatrix}
0&1/2\\
1/2&0
\end{pmatrix}.
\]
Then
\[
\varphi_{ij}(t) = \langle S_0, V(t) \otimes Y_{ij} \rangle
+ \langle S_1, t^2 V(t) \otimes Y_{ij} \rangle
\]
and
\[
\finv[\varphi_{ij}](w) = \langle S_0, \finv[V(t)](w) \otimes Y_{ij} \rangle
+ \langle S_1, \finv[t^2 V(t)](w) \otimes Y_{ij} \rangle,
\]
where~$\finv$, when applied to a matrix, is applied to each entry
individually.

With this, we may consider the following semidefinite programming
problem for finding a polynomial~$\sigma$ satisfying the conditions we
need.
\medskip

\noindent
{\bf Problem~\spacesdp/.}\enspace
Find $(d+1) \times (d+1)$ real positive semidefinite
matrices~$S_0$ and~$S_1$, and $(\floor{d/2} + 1) \times
(\floor{d/2}+1)$ real positive semidefinite matrices~$Q_{11}$, $Q_{22}$,
and~$Q_{12}$ that minimize
\[
\begin{split}
\max\{&\langle S_0, \finv[V(t)](0) \otimes Y_{11}\rangle + \langle
S_1, \finv[t^2 V(t)](0) \otimes Y_{11}\rangle,\\
&\langle S_0, \finv[V(t)](0) \otimes Y_{22}\rangle + \langle
S_1, \finv[t^2 V(t)](0) \otimes Y_{22}\rangle\}
\end{split}
\]
and are such that
\begin{equation}
\label{eq:mat-psd}
\bigl(\langle S_0, V(0) \otimes Y_{ij}\rangle - (\vol B(r_i))^{1/2}
(\vol(B(r_j))^{1/2}\bigr)_{i,j=1}^2
\end{equation}
is positive definite and the polynomial identities
\begin{equation}
\label{eq:non-pos-sdp}
\vcenter{\displaywidth=\hsize\advance\displaywidth by-1cm%
\displaylines{\noalign{\vskip5pt}
\qquad\langle S_0, \finv[V(t)](w) \otimes Y_{ij} \rangle
+ \langle S_1, \finv[t^2 V(t)](w) \otimes Y_{ij} \rangle
\hfill\cr
\hfill{} + \langle Q_{ij}, (w^2 - (r_i + r_j)^2) V(w)\rangle = 0\phantom{,}\qquad\cr
}}
\end{equation}
are satisfied for~$i$, $j = 1$, $2$ and~$i \leq j$.\hfill$\vartriangleleft$
\medskip

Any solution to this problem gives us a polynomial~$\sigma$ of the
shape~\eqref{eq:sigma} which is a sum of squares and satisfies
conditions~\eqref{eq:non-pos-poly} and~\eqref{eq:sigma-ii}, and so the
optimal value is an upper bound for the density of any packing of
spheres of radius~$r_1$ and~$r_2$. There might be, however,
polynomials~$\sigma$ satisfying these conditions that cannot be
obtained as feasible solutions to Problem~\spacesdp/, since
condition~\eqref{eq:non-pos-sdp} is potentially more restrictive than
condition~\eqref{eq:non-pos-poly} (compare Problem~\spacesdp/ above
with Luk\'acs' result mentioned in Section~\ref{sec:unit sphere
  computations}). In our practical computations this restriction was
not problematic and we found very good functions.

Observe also that Problem~\spacesdp/ is really a semidefinite
programming problem. Indeed, the polynomial identities
in~\eqref{eq:non-pos-sdp} can each be represented as~$d + 1$ linear
constraints in the entries of the matrices~$S_i$ and~$Q_{ij}$. This is
the case because testing whether a polynomial is identically zero is
the same as testing whether each monomial has a zero coefficient and
so, since all our polynomials are even and of degree~$2d$, we need
only check if the coefficients of the monomials~$x^{2k}$ are zero
for~$k = 0$, \dots,~$d$.

\subsection{Numerical results}

When solving Problem~\spacesdp/, we need to choose a
sequence $P_0$, $P_1$, \dots\ of polynomials. A choice which works
well in practice is
\[
P_k(t) = \mu_k^{-1} L_k^{n/2-1}(2 \pi t),
\]
where~$\mu_k$ is the absolute value of the coefficient
of~$L_k^{n/2-1}(2 \pi t)$ with largest absolute value. We observed in
practice that the standard monomial basis performs poorly.

To represent the polynomial identities in~\eqref{eq:non-pos-sdp} as
linear constraints we may check that each monomial~$x^{2k}$ of the
resulting polynomial has coefficient zero. We may use, however, any
basis of the space of even polynomials of degree at most~$2d$ to
represent such identities. Given such a basis, we expand each
polynomial in it and check that the expansion has only zero
coefficients. The basis we use to represent the identities
is~$P_0(t^2)$, $P_1(t^2)$, \dots,~$P_d(t^2)$, which we observed to
work much better than~$t^0$, $t^2$, \dots,~$t^{2d}$.  Notice that no
extra variables are necessary if we use a different basis to represent
the identities. We need only keep, for each polynomial in the
matrices~$\finv[V(t)](w)$, $\finv[t^2 V(t)](w)$, $w^2 V(w)$,
and~$V(w)$, its expansion in the basis we want to use.

The plot in Figure~\ref{fig:space-plots} was generated by solving
Problem~\spacesdp/ with~$d = 31$ using the solver
SDPA-GMP from the SDPA family~\cite{SDPA}. The input for the solver was generated by a
SAGE~\cite{SAGE} program working with floating-point arithmetic and
precision of~$256$ bits. For each dimension~$2$, \dots,~$5$ we solved
Problem~\spacesdp/ with~$r_1 = r / 1000$ and~$r_2 = 1$ for~$r = 200$,
\dots,~$1000$; the reason we start with~$r = 200$ is that for
smaller values of~$r$ the solver runs into numerical stability
problems. We also note that the solver has failed to solve some of the
problems, and these points have been disconsidered when generating the
plot. The number of problems that could not be solved was small
though: for~$n = 2$ all problems could be solved, for~$n = 3$ there
were~$6$ failures, for~$n = 4$ we had~$18$ failures, and finally
for~$n = 5$ the solver failed for~$137$ problems.

With our methods we can achieve higher values for~$d$, but we noticed
that the bound does not improve much after~$d = 31$. For instance, in
dimension~$2$ for~$r_1 = 1/2$ and~$r_2 = 1$, we obtain the
bound~$0.9174466\ldots$ for~$d = 31$ and the bound~$0.9174426\ldots$
for~$d = 51$.\medskip

\centerline{$*$\quad$*$\quad$*$}
\medskip

\noindent
In the previous account of how the plot in
Figure~\ref{fig:space-plots} was generated, we swept under the rug
all precision issues. We generate the data for the solver using
floating-point arithmetic, and the solver also uses floating-point
arithmetic. We cannot therefore be sure that the optimal value found
by the solver gives a valid bound at all.

If we knew \textit{a priori} that Problem~\spacesdp/ is strictly
feasible (that is, that it admits a solution in which the
matrices~$S_i$ and~$Q_{ij}$ are positive definite), and if we had some
control over the dual solutions, then we could use semidefinite
programming duality to argue that the bounds we compute are rigorous;
see for instance Gijswijt \cite[Chapter 7.2]{Gijswijt2005} for an
application of this approach in coding theory. The matter is however
that we do not know that Problem~\spacesdp/ is strictly feasible,
neither do we have knowledge about the dual solutions. In fact, most
of our approach to provide rigorous bounds consists in finding a
strictly feasible solution.

A naive idea to turn the bound returned by the solver into a rigorous
bound would be to simply project a solution returned by the solver
onto the subspace given by the constraints
in~\eqref{eq:non-pos-sdp}. If the original solution is of good
quality, then this would yield a feasible solution.

There are two
problems with this approach, though. The first problem is that the
matrices returned by the solver will have eigenvalues too close to
zero, and therefore after the projection they might not be positive
semidefinite anymore. We discuss how to handle this issue below.

The second problem is that to obtain a rigorous bound one would need
to perform the projection using symbolic computations and rational
arithmetic, and the computational cost is just too big. For instance,
we failed to do so even for~$d = 7$.

Our approach avoids projecting the solution using symbolic
computations. Here is an outline of our method.
\begin{enumerate}
\item Obtain a solution to the problem with objective value close
  the optimal value returned by the solver, but in which every
  matrix~$S_i$ and~$Q_{ij}$ is positive definite by a good margin and the
  maximum violation of the constraints is very small.

\item Approximate matrices~$S_i$ and~$Q_{ij}$ by rational positive
  semidefinite matrices~$\bar{S}_i$ and~$\bar{Q}_{ij}$ having
  minimum eigenvalues at least~$\lambda_i$ and~$\mu_{ij}$, respectively.

\item Compute a bound on how much
  constraints~\eqref{eq:non-pos-sdp} are violated by~$\bar{S}_i$
  and~$\bar{Q}_{ij}$ using rational arithmetic.  If the maximum violation
  of the constraints is small compared to the bounds~$\lambda_i$
  and~$\mu_{ij}$ on the minimum eigenvalues, then we may be sure that the
  solution can be changed into a feasible solution without changing
  its objective value too much.
\end{enumerate}
We now explain how each step above can be accomplished.

First, most likely the matrices~$S_i$, $Q_{ij}$ returned by the solver
will have eigenvalues very close to zero, or even slightly negative
due to the numerical method which might allow infeasible steps.

To obtain a solution with positive definite matrices we may use the
following trick (cf.~L\"ofberg~\cite{Lofberg2011}). We solve
Problem~\spacesdp/ to find its optimal value, say~$z^*$. Then we solve
a feasibility version of Problem~\spacesdp/ in which the objective
function is absent, but we add a constraint to ensure that
\[
\begin{split}
\max\{&\langle S_0, \finv[V(t)](0) \otimes Y_{11}\rangle + \langle
S_1, \finv[t^2 V(t)](0) \otimes Y_{11}\rangle,\\
&\langle S_0, \finv[V(t)](0) \otimes Y_{22}\rangle + \langle
S_1, \finv[t^2 V(t)](0) \otimes Y_{22}\rangle\} \leq z^* + \eta,
\end{split}
\]
where~$\eta > 0$ should be small enough so that we do not jeopardize
the objective value of the solution, but not too small so that a good
strictly feasible solution exists. (We take~$\eta = 10^{-5}$, which
works well for the purpose of making a plot.) The trick here is that
most semidefinite programming solvers, when solving a feasibility
problem, will return a strictly feasible solution --- the analytical
center ---, if one can be found.

This partially addresses step~(1), because though the solution we find
will be strictly feasible, it might violate the constraints too
much. To quickly obtain a solution that violates the constraints only
slightly, we may project our original solution onto the subspace given
by constraints~\eqref{eq:non-pos-sdp} using floating-point
arithmetic of high enough precision. If the solution returned by the
solver had good precision to begin with, then the projected solution
will still be strictly feasible.

As an example, for our problems with~$d = 31$, SDPA-GMP returns
solutions that violate the constraints by at most~$10^{-30}$. By doing
a projection using floating-point arithmetic with~$256$ bits of
precision in SAGE, we can bring the violation down to
about~$10^{-70}$ without affecting much the eigenvalues of the matrices.

So we have addressed step~(1). For step~(2) we observe that simply
converting the floating-point matrices~$S_i$, $Q_{ij}$ to rational
matrices would work, but then we would be in trouble to estimate the
minimum eigenvalues of the resulting rational matrices in a rigorous
way. Another idea of how to make the conversion is as follows.

Say we want to approximate floating-point matrix~$A$ by a rational
matrix~$\bar{A}$. We start by computing numerically an
approximation to the least eigenvalue of~$A$. Say~$\tilde{\lambda}$ is
this approximation. We then use binary search in the
interval~$[\tilde{\lambda} / 2, \tilde{\lambda}]$ to find the
largest~$\lambda$ so that the matrix~$A - \lambda I$ has a Cholesky
decomposition; this we do using floating-point arithmetic of high
enough precision. If we have this largest~$\lambda$, then
\[
A = L L^\tsp + \lambda I
\]
where~$L$ is the Cholesky factor of~$A - \lambda I$. Then we
approximate~$L$ by a rational matrix~$\bar{L}$ and we set
\[
\bar{A} = \bar{L} \bar{L}^\tsp + \lambda I,
\]
obtaining thus a rational approximation of~$A$ and a bound on its
minimum eigenvalue.

Our idea for step~(3) is to compare the maximum violation of
constraints~\eqref{eq:non-pos-sdp} with the minimum eigenvalues of the
matrices. To formalize this idea, suppose that
constraints~\eqref{eq:non-pos-sdp} are slightly violated
by~$\bar{S}_i$, $\bar{Q}_{ij}$. So for instance we have
\begin{equation}
\label{eq:first-const}
\vcenter{\displaywidth=\hsize\advance\displaywidth by-1cm%
\displaylines{\noalign{\vskip5pt}
\qquad\langle \bar{S}_0, \finv[V(t)](w) \otimes Y_{11} \rangle
+ \langle \bar{S}_1, \finv[t^2 V(t)](w) \otimes Y_{11} \rangle
\hfill\cr
\hfill{} + \langle \bar{Q}_{11}, (w^2 - (2r_1)^2) V(w)\rangle = p,\qquad\cr\noalign{\vskip5pt}
}}
\end{equation}
where~$p$ is an even polynomial of degree at most~$2d$. Notice that we
may compute an upper bound on the absolute values of the coefficients
of~$p$ using rational arithmetic.

To fix this constraint we may distribute the coefficients of~$p$ in
the matrices~$\bar{S}_0$ and~$\bar{Q}_{11}$ (a very similar idea was
presented by L\"ofberg~\cite{Lofberg2009}). To make things precise,
for~$k = 1$, \dots,~$d$ write
\[
\begin{split}
i(k) &= \min\{ \floor{d/2}, k - 1 \},\\
j(k) &= k - 1 - i(k).
\end{split}
\]
Pairs~$(i(k), j(k))$ correspond to entries of the
matrix~$V(w)$. Notice that the polynomial~$(w^2 - (2r_1)^2) V(w)_{i(k)
  j(k)}$ has degree~$2k$.

So the polynomials
\[
\begin{split}
R_0 &= \finv[V(t)_{00}](w),\\
R_1 &= (w^2 - (2r_1)^2) V(w)_{i(1) j(1)},\\
&\setbox0=\hbox{${}={}$}\hbox to\wd0{\hfil\vdots\hfil}\\
R_d &= (w^2 - (2r_1)^2) V(w)_{i(d) j(d)}
\end{split}
\]
form a basis of the space of even polynomials of degree at
most~$2d$. We may then express our polynomial~$p$ in this basis as
\[
p = \alpha_0 R_0 + \cdots + \alpha_d R_d.
\]
Now, we subtract~$\alpha_0$ from~$(\bar{S}_0)_{(1, 0), (1, 0)}$
and~$\alpha_k$ from~$(\bar{Q}_{11})_{i(k)j(k)}$, for~$k = 1$,
\dots,~$d$. The resulting matrices satisfy
constraint~\eqref{eq:first-const}, and as long as the~$\alpha_k$ are
small enough, they should remain positive semidefinite. More
precisely, it suffices to require that~$d\, \|(\alpha_1, \ldots,
\alpha_d)\|_\infty \leq \mu_{11}$ and~$|\alpha_0| \leq \lambda_0$.

There are two issues to note in our approach. The first one is that it
has to be applied again twice to fix the other two constraints
in~\eqref{eq:non-pos-sdp}. The applications do not conflict with each
other: in each one we change a different matrix~$\bar{Q}_{ij}$ and
different entries of~$\bar{S}_0$. We have to be careful though that we
consider the changes to~$\bar{S}_0$ at once in order to check
that it remains positive semidefinite.

The second issue is how to compute the
coefficients~$\alpha_k$. Computing them explicitly using symbolic
computation is infeasible. One way to do it then is to consider the
basis change matrix between the bases~$x^{2k}$, for~$k = 0$,
\dots,~$d$, and~$R_0$, \dots,~$R_d$, which we denote by~$U$. Then we
know that
\[
\|(\alpha_0, \ldots, \alpha_d)\|_\infty \leq \|U^{-1}\|_\infty \|p\|_\infty,
\]
where~$\|p\|_\infty$ is the~$\infty$-norm of the vector of
coefficients of~$p$ in the basis~$x^{2k}$.

So if we have an upper bound for~$\|U^{-1}\|_\infty$ we are done. To
quickly find such an upper bound, we use an algorithm of
Higham~\cite{Higham1983} (cf.~also Higham~\cite{Higham1987}) which
works for triangular matrices, like~$U$. This bound proved to be good
enough for our purposes.

\section{Improving sphere packing bounds}
\label{sec:improve-ce-detail}

We now prove Theorem~\ref{thm:improve-ce} and show how to use it in
order to compute the bounds presented in Table~\ref{tab:improve-ce}.

\begin{proof}[Proof of Theorem~\ref{thm:improve-ce}]
Let~$x_1$, \dots,~$x_N \in \R^n$ and~$L \subseteq \R^n$ be a lattice
such that
\[
\bigcup_{v \in L} \bigcup_{i=1}^N v + x_i + B
\]
is a sphere packing, where~$B$ is the ball of radius~$1/2$ centered at
the origin. We may assume that, if~$i \neq j$ and~$v \neq 0$, then the
distance between the centers of~$v + x_i + B$ and~$x_j + B$ is greater
than~$1 + \varepsilon_m$. Indeed, we could discard all~$x_i$ that lie
at distance less than~$1 + \varepsilon_m$ from the boundary of the
fundamental parallelotope of~$L$. If the fundamental parallelotope is
big enough (and if it is not, we may consider a dilated version of~$L$
instead), this will only slightly alter the density of the packing,
and the resulting packing will have the desired property.

Consider the sum
\begin{equation}
\label{eq:ce-sum}
\sum_{i,j=1}^N \sum_{v \in L} f(v + x_i - x_j).
\end{equation}

Using the Poisson summation formula, we may rewrite it as
\[
\frac{1}{\vol(\R^n / L)} \sum_{i,j=1}^N \sum_{u \in L^*} \hat{f}(u)
e^{2\pi i u \cdot (x_i - x_j)}.
\]

By discarding all summands in the inner sum above except the one
for~$u = 0$, we see that~\eqref{eq:ce-sum} is at least
\[
\frac{N^2 \vol B}{\vol(\R^n / L)}.
\]

For~$k = 1$, \dots,~$m$, write~$F_k = \{\, (i, j) : \|x_i - x_j\| \in
[1 + \varepsilon_{k-1}, 1 + \varepsilon_k)\,\}$. Then we see
that~\eqref{eq:ce-sum} is at most
\[
N f(0) + \eta_1 |F_1| + \cdots + \eta_m |F_m|.
\]

So we see that
\[
\frac{N \vol B}{\vol(\R^n / L)} \leq f(0) + \eta_1 \frac{|F_1|}{N} +
\cdots + \eta_m \frac{|F_m|}{N}.
\]
Notice that the left-hand side above is exactly the density of our
packing. Now, from the definition of~$M(\varepsilon)$, it is clear
that for~$k = 1$, \dots,~$m$ we have
\[
\frac{|F_1|}{N} + \cdots + \frac{|F_k|}{N} \leq M(\varepsilon_k),
\]
and the theorem follows.
\end{proof}

To find good functions~$f$ satisfying the conditions required by
Theorem~\ref{thm:improve-ce} we used the same approach from
Section~\ref{sec:Euclidean space computations}. We fix an odd positive
integer~$d$ and specify~$f$ via its Fourier transform, writing
\[
\varphi(t) = \sum_{k=0}^d a_k t^{2k}
\]
and setting
\[
\hat{f}(u) = \varphi(\|u\|) e^{-\pi \|u\|^2}.
\]

Using Lemma~\ref{lem:inverse} we then have that
\[
f(x) = \finv[\varphi](\|x\|) e^{-\pi \|x\|^2},
\]
where
\[
\finv[\varphi](w) = \sum_{k=0}^d a_k k!\, \pi^{-k} L_k^{n/2-1}(\pi
w^2)
\]
is a polynomial obtained as a linear transformation of~$\varphi$.

Constraint~(ii), requiring that~$\hat{f}(u) \geq 0$ for all~$u \in
\R^n$, can be equivalently expressed as requiring that the
polynomial~$\varphi$ should be a sum of squares. 

Recalling the result of Luk\'acs mentioned in 
Section~\ref{sec:unit sphere computations},
one may also express constraint~(iii) in terms of sums of squares: one
simply has to require that there exist polynomials~$p_0(w)$
and~$q_0(w)$ such that
\[
\finv[\varphi](w) = -(p_0(w))^2 - (w^2 - (1 + \varepsilon_m)^2) (q_0(w))^2.
\]

In a similar way, one may express constraints~(iv). For instance, for
a given~$k$, we require that there should exist
polynomials~$p_k(w)$ and~$q_k(w)$ such that
\[
\finv[\varphi](w)e^{-\pi(1 + \varepsilon_{k-1})^2} - \eta_1 = -(p_k(w))^2 - (w - (1 +
\varepsilon_{k-1}))((1 + \varepsilon_k) - w)(q_k(w))^2,
\]
and this implies~(iv).

So we may represent the constraints on~$f$ in terms of sums of
squares, and therefore also in terms of semidefinite programming, as
we did in Sections~\ref{sec:unit sphere computations} 
and~\ref{sec:Euclidean space computations}. There is only
the issue that now we want to find a function~$f$ that satisfies
constraints~(i)--(iv) of the theorem and that minimizes the maximum
in~\eqref{eq:ce-lp}. This does not look like a linear objective
function, but since by linear programming duality this maximum is
equal to
\[
\begin{array}{rll}
\min&f(0) + y_1 U(\varepsilon_1) + \cdots + y_m U(\varepsilon_m)\\
&y_i + \cdots + y_m \geq \eta_i&\text{for~$i = 1$, \dots,~$m$},\\
&y_k \geq 0&\text{for~$k = 1$, \dots,~$m$},
\end{array}
\]
we may transform our original problem into a single minimization
semidefinite programming problem, the optimal value of which provides
an upper bound for the densities of sphere packings.

It is still a question how to compute upper bounds
for~$M(\varepsilon)$. For this we use upper bounds on the sizes of
spherical codes. A \defi{spherical code} with \defi{minimum angular
  distance}~$0 < \theta \leq \pi$ is a set~$C \subseteq S^{n-1}$ such
that the angle between any two distinct points in~$C$ is at
least~$\theta$. In other words, a spherical code with minimum angular
distance $\theta$ gives as packing of spherical caps with angle
$\theta/2$. We denote by~$A(n, \theta)$ the maximum cardinality of any
spherical code in~$S^{n-1}$ with minimum angular distance~$\theta$.

There is a simple relation between~$A(n, \theta)$
and~$M(\varepsilon)$. Namely, if~$\varepsilon < \sqrt{2} - 1$, then
\[
M(\varepsilon) \leq A(n, \arccos t(\varepsilon)),
\]
where
\[
t(\varepsilon) = 1 - \frac{1}{2 (1 + \varepsilon)^2}.
\]

To see this, suppose~$x$, $y \in \R^n$ are such that~$\|x\|$, $\|y\|
\in [1, 1 + \varepsilon)$ and~$\|x-y\| \geq 1$. Then by the law of
cosines, if~$\theta$ is the angle between~$x$ and~$y$, we have
\[
\cos \theta = \frac{\|x\|^2 + \|y\|^2 - \|x-y\|^2}{2\|x\|\|y\|}.
\]
The maximum of the right-hand side above for vectors~$x$ and~$y$ such
that~$\|x\|$, $\|y\| \in [1, 1 + \varepsilon)$ and~$\|x-y\| \geq 1$
gives~$t(\varepsilon)$.

Indeed, to maximize the right-hand side above, we may assume
that~$\|x-y\| = 1$. Then
\[
\label{eq:angle}
\cos \theta = \frac{\alpha^2 + \beta^2 - 1}{2\alpha\beta} =
\Theta(\alpha, \beta),
\]
where~$\alpha = \|x\|$ and~$\beta = \|y\|$. 

If we compute the derivative of~$\Theta(\alpha, \beta)$ with respect
to~$\alpha$ we obtain
\[
\frac{\alpha^2 - \beta^2 + 1}{2 \alpha^2 \beta}.
\]
From this we see that, since~$\varepsilon < \sqrt{2} - 1$, for a
fixed~$\beta \in [1, 1 + \varepsilon)$, function~$\Theta(\alpha,
\beta)$ is increasing in~$\alpha$, for~$\alpha \geq 1$. Similarly, by
taking the derivative with respect to~$\beta$, one may conclude that
for a fixed~$\alpha \in [1, 1 + \varepsilon)$,
function~$\Theta(\alpha, \beta)$ is increasing in~$\beta$, for~$\beta
\geq 1$. So~$\Theta(\alpha, \beta)$ is maximized in our domain when
one takes~$\alpha = \beta = 1 + \varepsilon$. This implies that
\[
\cos \theta \leq \Theta(1 + \varepsilon, 1 + \varepsilon) = 1 -
\frac{1}{2 (1 + \varepsilon)^2},
\]
and so we have~$t(\varepsilon)$.

For the bounds of Table~\ref{tab:improve-ce} we took~$d = 31$.  To
compute upper bounds for~$A(n, \theta)$ we used the semidefinite
programming bound of Bachoc and Vallentin~\cite{BachocV2008}. The
bounds we used for computing Table~\ref{tab:improve-ce} are given in
Table~\ref{tab:eps-meps}.

Finally, we mention that all numerical issues discussed in
Section~\ref{sec:Euclidean space computations} also happen with the
approach we sketched in this section. In particular, the choices of
bases are important for the stability of the semidefinite programming
problems involved. We use the same bases as described in
Section~\ref{sec:Euclidean space computations} though, so we skip a
detailed discussion here. Notice moreover that our bounds are
rigorous, having been checked with the same approach described in
Section~\ref{sec:Euclidean space computations}. 

\def\pz/{\phantom{0}}
\begin{table}[htb]
\begin{tabular}{cl}
{\sl Dimension}&\hfil{\sl $(\varepsilon, U(\varepsilon))$ pairs}\\[2pt]
\vrule height11.5pt width0pt  $3$&$(0.022753, \pz/12)$, $(0.054092, \pz/13)$, $(0.082109, \pz/14)$, $(0.113864, \pz/15)$\\[2pt]
$4$&$(0.008097, \pz/24)$, $(0.017446, \pz/25)$, $(0.025978, \pz/26)$, $(0.036951, \pz/27)$\\[2pt]
$5$&$(0.003013, \pz/45)$, $(0.008097, \pz/46)$, $(0.013259, \pz/47)$, $(0.017446, \pz/48)$\\[2pt]
$6$&$(0.002006, \pz/79)$, $(0.004024, \pz/80)$, $(0.006054, \pz/81)$, $(0.008097, \pz/82)$\\[2pt]
$7$&$(0.001001, 136)$, $(0.002006, 137)$, $(0.003013, 138)$, $(0.004024, 139)$,\\
   &$(0.005037, 140)$\\[2pt]
$9$&$(0.003013, 373)$, $(0.029233, 457)$, $(0.030325, 459)$, $(0.031421, 464)$,\\
   &$(0.032520, 468)$, $(0.033622, 473)$\\[2pt]
\end{tabular}
\bigskip

\caption{For each dimension considered in Table~\ref{tab:improve-ce} we show
  here the sequence~$\varepsilon_1 < \cdots < \varepsilon_m$ and the
  upper bounds~$U(\varepsilon_k)$ used in our application of
  Theorem~\ref{thm:improve-ce}.}
\label{tab:eps-meps}
\end{table}

We refrained from performing similar calculations for higher
dimensions because of two reasons. Firstly, we expect that the
improvements are only minor. Secondly, the computations of the upper
bounds for $M(\varepsilon)$ in higher dimensions require substantially
more time as one needs to solve the semidefinite programs with a high
accuracy solver, see Mittelmann and
Vallentin~\cite{MittelmannVallentin2010}.

\section*{Acknowledgements}

We thank Rudi Pendavingh and Hans D.~Mittelmann for very helpful
discussions from which we learned how to perform the numerically
stable computations.

\end{document}